\newtheorem{Theorem}{Theorem}[section]
\newtheorem{Lemma}[Theorem]{Lemma}
\newtheorem{Proposition}[Theorem]{Proposition}
\newtheorem{corollary}[Theorem]{Corollary}
\theoremstyle{remark}
\newtheorem{remark}{Remark}
\newtheorem{example}{Example}
\newcommand{\z}{\mathbb Z}
\newcommand{\m}{\mathfrak M}
\newcommand{\g}{\mathcal{G}}
\newcommand{\hp}{\mathbb H \mathrm P}
\newcommand{\op}{\mathbb O \mathrm P}
\newcommand{\Ker}{\mathrm{Ker}}
\title[Mapping class group of projective planes]{The mapping class group of manifolds which are like projective planes}
\author{Yang Su}
\address{School of Mathematics and Systems Science, Chinese Academy of Sciences, Beijing 100190, China}
\address{School of Mathematical Sciences, University of Chinese Academy of Sciences, Beijing 100049, China}
\email{suyang@math.ac.cn}
\author{Wei Wang}
\address{Department of Mathematics and Computational Science, Shanghai Ocean University, Shanghai 201306, China}
\email{weiwang@amss.ac.cn}
\date{}
\begin{document}

\maketitle

\begin{abstract}
In this paper we compute the mapping class group of simply-connected closed smooth manifolds $M$ with integral homology $H_{*}(M) \cong \z \oplus \z \oplus \z$ provided that $\dim M \ne 4$.
\end{abstract}

\section{introduction}
A manifold which is like a projective plane is a simply-connected closed
smooth manifold $M$ such that $H_*(M;\z)\cong \z \oplus \z \oplus \z$. Besides the spheres these are the simply-connected manifolds with the simplest homology group. Solution of the Hopf invariant $1$ problem implies that the dimension of $M$ is necessarily $2m=4$, $8$ or $16$. Typical examples are the projective planes $\mathbb C \mathrm P^2$, $\mathbb H \mathrm P^2$ and $\mathbb O \mathrm P^2$ over complex numbers, quaternions and octonions respectively. In dimension $4$, such a manifold is homeomorphic to $\mathbb C \mathbb P^2$ by Freedman's classification of simply-connected $4$-manifolds. In dimension $8$ and $16$ these manifolds were first studied by Eells and Kuiper in \cite{EellsKuiper62} as manifolds which admit a Morse function with three critical points. In that paper the authors obtained a classification of these manifolds up to connected sum with homotopy spheres. This was completed to a diffeomorphism classification by Kramer and Stolz (\cite[Theorem A and Theorem 1.3]{KramerStolz}).  It turns out that the diffeomorphism class of a projective plane like manifold $M$ of dimension $2m \ge 8$ is determined by its Pontrjagin class $p_{m/4}(M)$. More precisely, let $x \in H^m(M)$ be a generator, then $M=M_t$ has Pontrjagin class 
\[
\small{
p_{m/4}(M_t)= \left \{ \begin{array}{ll} 
2(1+2t)x, \   t \equiv 0,7,48,55 \pmod {56} & m=4 \\
6(1+2t)x, \ t \equiv 0, 127, 16128, 16255 \pmod{16256} & m=8 \end{array} \right.}\]
This implies that  the $\hat A$-genus is a complete numerical invariant of these manifolds
\[
\small{
\hat A(M_t)= \left \{ \begin{array}{lll} 
-\frac{t(t+1)}{7 \cdot 8}, &  \ t \equiv 0,7,48,55 \pmod {56} & m=4 \\
-\frac{t(t+1)}{127\cdot 128}, & \ t \equiv 0, 127, 16128, 16255 \pmod{16256}& m=8 \end{array} \right.}
\]

\medskip

Manifolds which are like projective planes are interesting objects  in Riemannian geometry. The quaternionic and octonionic projective planes are compact symmetric spaces of rank one and endowed with positive sectional curvature metrics. Other manifolds in the class also support interesting Riemannian metrics.  For example, it is shown in \cite{TangZhang} that $\mathbb H \mathrm P^2$-like manifolds, alias \emph{Eells-Kuiper quaternionic projective planes}, admit $SC^p$-Riemannian metrics, meaning that all geodesics starting from a point $p$ are periodic with the same length.

\medskip

The aim of this paper is to study the mapping class group $\m(M)$ of these manifold. This is the group of isotopy classes of orientation-preserving self-diffeomorphisms of $M$, and can be viewed as the  path components of the group of orientation-preserving self-diffeomorphisms $\mathrm{Diff}(M)$, which is a topological group under the $C^{\infty}$-topology. Note that since the Pontrjagin class $p_{m/4}(M)$ is non-trivial and the cohomology ring $H^*(M)$ is isomorphic to the truncated polynomial ring $\mathbb Z[x]/(x^3)$ with deg $x=m$, any self-diffeomorphism of $M$ 
is necessarily
orientation-preserving.  

To state the results we need to introduce some notations. We denote by $\m(D^n,  \partial )$ the mapping class group of the $n$-disc relative to its boundary. When $n\geqslant 5$, this group is isomorphic to the group of $(n+1)$-dimensional homotopy spheres $\Theta_{n+1}$ by the gluing construction (\cite[Corollary 2]{Cerf}). For any $n$-manifold $M$, fix an embedded $n$-disc $D \subset M$,  there is a homomorphism $$\m(D^n, \partial) \to \m(M)$$
given by extending a diffeomorphism on $D$ by the identity on the complement of $D$, such a diffeomorphism is called a \emph{local diffeomorphism}. The $\alpha$-invariant 
$ \hat{\mathscr{A}} \colon \Omega_*^{\mathrm{spin}} \to KO^{-*}(\mathrm{pt})$ is a ring homomorphism generalizing the $\hat A$-genus for $4k$-dimensional spin manifolds  (\cite[\S 4.3]{Hitchin}). The composition
$$\Theta_{2m+1} \to  \Omega_{2m+1}^{\mathrm{spin}} \stackrel{\hat{\mathscr A}}{\longrightarrow} KO^{-(2m+1)}(\mathrm{pt}) =\z/2$$ 
is surjective. An exotic sphere with non-trivial $\alpha$-invariant is usually called a \emph{Hitchin sphere}. 

\begin{Theorem}\label{thm:main1}
Let $M$ be an $\mathbb H \mathrm P^{2}$-like manifold, the mapping class group $\m(M)$ is isomorphic to $\z/2$, generated by any local diffeomorphism whose corresponding exotic sphere has non-trivial $\alpha$-invariant. For those $M$ with $\hat{A}(M)$ even, there is an isomorphism 
$$\m(M) \stackrel{\cong}{\longrightarrow} \z/2, \ \ f \mapsto \hat{\mathscr{A}}(M_f)$$
where $M_f$ is the mapping torus of $f$.
\end{Theorem}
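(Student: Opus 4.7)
The plan is to realize every mapping class by a local diffeomorphism, thereby identifying $\mathcal{M}(M)$ with a quotient of $\Theta_9$, and then pin that quotient down to $\mathbb{Z}/2$ using the $\alpha$-invariant of the mapping torus.

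First I would show that the natural map $I \colon \Theta_9 = \mathcal{M}(D^8,\partial) \to \mathcal{M}(M)$ is surjective. Since $p_2(M) = 6(1+2t)x$ is a non-zero multiple of the generator $x \in H^4(M)$, any self-diffeomorphism $f$ satisfies $f^*x = x$, hence acts as the identity on $H^*(M)$. An obstruction-theoretic computation using the CW decomposition $M \simeq S^4 \cup_\alpha e^8$ (with obstructions living only in $H^4(M;\pi_4 M)$ and $H^8(M;\pi_8 M)$) then upgrades this to $f \simeq \mathrm{id}_M$. The combination of Kreck-style modified surgery and Cerf's pseudoisotopy theorem in the simply-connected closed case of dimension $\geq 5$ then shows that $f$ is isotopic to a diffeomorphism supported in a disc. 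Consequently $\mathcal{M}(M)$ is a quotient of $\Theta_9 \cong (\mathbb{Z}/2)^3$.

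Next I would detect a non-trivial class in $\mathcal{M}(M)$ via the mapping-torus $\alpha$-invariant $f \mapsto \hat{\mathscr{A}}(M_f) \in KO^{-9}(\mathrm{pt}) = \mathbb{Z}/2$. For a local diffeomorphism $I(\Sigma)$ with $\Sigma \in \Theta_9$, a standard cut-and-paste computation yields $M_{I(\Sigma)} \cong (M \times S^1)\#\Sigma$, and multiplicativity of $\hat{\mathscr{A}}$ combined with $\hat{\mathscr{A}}(S^1)=1 \in KO^{-1}(\mathrm{pt})$ gives
\[
\hat{\mathscr{A}}(M_{I(\Sigma)}) \;=\; \hat A(M) \cdot \hat{\mathscr{A}}(S^1) + \hat{\mathscr{A}}(\Sigma) \;=\; \hat A(M) + \hat{\mathscr{A}}(\Sigma) \pmod 2.
\]
Since $\hat{\mathscr{A}}(M_f)$ depends only on the isotopy class of $f$, taking $\Sigma$ to be a Hitchin sphere produces a non-trivial element of $\mathcal{M}(M)$, so $|\mathcal{M}(M)| \geq 2$. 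When $\hat A(M)$ is even, the displayed formula further identifies $f \mapsto \hat{\mathscr{A}}(M_f)$ with the desired isomorphism to $\mathbb{Z}/2$, which settles the second part of the statement once the first has been proved.

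The matching upper bound $|\mathcal{M}(M)| \leq 2$---equivalently the containment $\ker\bigl(\hat{\mathscr{A}} \colon \Theta_9 \to \mathbb{Z}/2\bigr) \subseteq \ker(I)$---is the crux of the proof. The natural strategy is to exhibit explicit isotopies trivialising $I(\Sigma)$ whenever $\hat{\mathscr{A}}(\Sigma)=0$ by a parametric construction: slide the supporting disc of $I(\Sigma)$ along an embedded $S^4\subset M$ representing $x$, realising $\ker(I)$ as the image of a Gromoll-type clutching map whose source is built from low-dimensional homotopy of $\mathrm{Diff}(D^k,\partial)$. The main obstacle is showing that this image fills out precisely the $4$-element subgroup $\ker(\hat{\mathscr{A}})$; this requires combining the known homotopy information about $\mathrm{Diff}(D^k,\partial)$ with the specific attaching map $\alpha \in \pi_7(S^4)$ presenting $M$, and is where the detailed geometry of the Eells--Kuiper handle decomposition enters essentially.
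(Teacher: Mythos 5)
Your proposal has two genuine gaps, one at the start and one at the end, and the end gap is the whole theorem.

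First, the surjectivity step. You argue that because $f^*$ is the identity on $H^*(M)$, obstruction theory using $M\simeq S^4\cup_\alpha e^8$ gives $f\simeq\mathrm{id}_M$. This is false: the paper itself notes (Remark after Theorem 1.1, citing Baues and Eells--Kuiper) that $\mathrm{haut}(M)\cong\mathbb Z/2$, and the nontrivial self-homotopy-equivalence also acts as the identity on $H^*(M)$. The obstruction in $H^8(M,S^4;\pi_8(M))\cong\pi_8(M)$ does not vanish for free, and you have given no argument particular to diffeomorphisms that would kill it. (That every \emph{diffeomorphism} is homotopic to $\mathrm{id}$ is deduced in the paper as a corollary of Theorem 1.1, not used to prove it.) Even granting $f\simeq\mathrm{id}$, the claim that ``Kreck-style modified surgery and Cerf's pseudoisotopy theorem'' then put $f$ in the image of $\Theta_9\to\m(M)$ is a black box covering a nontrivial argument. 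The paper obtains surjectivity by an entirely different route: Wall's exact sequence $\Theta_9\to\m(M)\to\m(N)\to I(M)\to 0$, the identification $\m(N)\cong\mathrm{aut}(\xi)$ via the tubular-neighborhood theorem, the computation $\mathrm{aut}(\xi)\cong\mathbb Z/2$ via the Samelson product $\langle-,[f_P]\rangle\colon\pi_1SO(4)\to\pi_4SO(4)$, and the fact that $I(M)=\Theta_8\cong\mathbb Z/2$ forces $\gamma$ to be an isomorphism.

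Second, and more seriously, you identify the containment $\Ker(\hat{\mathscr A}\colon\Theta_9\to\mathbb Z/2)\subseteq\Ker(\phi)$ as ``the crux of the proof'' and then do not prove it: you sketch a ``parametric construction'' with a ``Gromoll-type clutching map,'' acknowledge that the ``main obstacle'' is showing the image fills out $\Ker(\hat{\mathscr A})$, and leave it there. The speculative disc-sliding picture is not an argument and there is no evidence it can be made to work. The paper's actual proof of this bound is the technical heart of the article: Proposition \ref{prop:ker} identifies $\Ker\bar\phi$ with the inertia group $I(X)$ of the manifold $X$ obtained by surgery on $S^1\times D\subset S^1\times M$ (using Cerf's pseudoisotopy theorem together with a modified-surgery $h$-cobordism argument); Proposition \ref{prop:inertia} computes $I(X)=\Ker(\Theta_9\to\Omega_9(B,p))$ by constructing a surjective surgery-obstruction map $\theta\colon\Omega_{10}(B,p)\to\mathbb Z/2$ and matching it with Brown's generalized Kervaire invariant $\Psi$; finally $\Ker(\Theta_9\to\Omega_9(B,p))=\Ker(\Theta_9\to\Omega_9^{\mathrm{spin}})$ and the known fact that this has image $\mathbb Z/2$ detected by the $\alpha$-invariant closes the argument. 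None of this appears in your proposal. The middle step (the $\hat{\mathscr A}(M_{I(\Sigma)})$ computation) is fine and agrees with the paper, but it only produces the lower bound.
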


The quaternion projective plane $\mathbb H \mathrm P^2$ admits Riemannian metrics of positive scalar curvature, and the space $\mathcal{R}^+_{\mathrm{scal}}$ of such metrics is shown to be not path-connected \cite[\S 4.4]{Hitchin}.  The group $\mathrm{Diff}(\mathbb H \mathrm P^2)$ acts on $\mathcal{R}^+_{\mathrm{scal}}$ via pulling back metrics by a diffeomorphism. The quotient space $\mathcal M^+_{\mathrm{scal}}$ is the moduli space of Riemannian metrics on $\mathbb H \mathrm P^2$ with positive scalar curvature. The above action descends to an action of the mapping class group on the set of path components of $\mathcal{R}^+_{\mathrm{scal}}$ with quotient the set of path components of $\mathcal M^+_{\mathrm{scal}}$. As a corollary of Theorem \ref{thm:main1} we have

\begin{corollary}\label{cor:curvature}
\begin{enumerate}
\item The map $\pi_0\mathcal{R}^+_{\mathrm{scal}} \to \pi_0 \mathcal M^+_{\mathrm{scal}}$ is a two-to-one correspondence. 
\item For a Riemannian metric on $\mathbb H \mathrm P^2$ of positive scalar curvature any isometry is isotopic to the identity. 
\end{enumerate}
\end{corollary}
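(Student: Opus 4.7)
The plan is to deduce both parts from a single statement: the induced action of $\m(\hp^{2})\cong\z/2$ on $\pi_0\mathcal R^+_{\mathrm{scal}}$ is free. Granted this, part (1) is immediate because the quotient of a free $\z/2$-action on a set is a two-to-one map; and part (2) follows because any isometry $\psi$ of a PSC metric $g$ satisfies $\psi^{*}g=g$, so $\psi$ fixes the path component of $g$, and freeness forces $[\psi]=0$ in $\m(\hp^{2})$, i.e.\ $\psi$ is isotopic to the identity.

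To establish freeness, I would let $\phi\in\mathrm{Diff}(\hp^{2})$ be a representative of the nontrivial mapping class provided by Theorem~\ref{thm:main1}, so that $\hat{\mathscr A}(M_\phi)\ne 0$. I argue by contradiction: assume there exists a PSC metric $g$ with $\phi^{*}g$ in the same path component of $\mathcal R^+_{\mathrm{scal}}$ as $g$, and pick a smooth path $\{g_t\}_{t\in[0,1]}$ of PSC metrics with $g_0=g$ and $g_1=\phi^{*}g$, modified so as to be $t$-independent near the endpoints. On $\hp^{2}\times[0,1]$ consider the metric $g_t + N^{2}\,dt^{2}$. A classical warped-product computation (Gromov--Lawson, Gajer) gives positive scalar curvature once $N$ is sufficiently large, and the $t$-independence near the endpoints together with the relation $g_1=\phi^{*}g_0$ ensures that the metric descends smoothly across the identification $(x,1)\sim(\phi(x),0)$ to a PSC metric on the mapping torus $M_\phi$. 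This contradicts the Lichnerowicz--Hitchin vanishing theorem, which forbids PSC metrics on any spin manifold with non-zero $\alpha$-invariant. Hence the action is free, which is what was needed.

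The only nontrivial technical step is the construction of a PSC metric on the mapping torus from a path of PSC metrics joining $g$ to $\phi^{*}g$; this is the classical Hitchin--Gajer trick, and the remainder of the argument is a formal deduction from Theorem~\ref{thm:main1} together with the standard Lichnerowicz--Hitchin obstruction. I anticipate no other delicate point.
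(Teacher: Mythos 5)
Your argument is correct and follows the same basic strategy as the paper: reduce both parts to the statement that the action of $\m(\hp^2)\cong\z/2$ on $\pi_0\mathcal R^+_{\mathrm{scal}}$ is free, using Theorem~\ref{thm:main1} to identify the nontrivial class by $\hat{\mathscr A}$ of its mapping torus and then obstructing positive scalar curvature on that mapping torus via Lichnerowicz--Hitchin. The main organizational difference is that the paper cites Hitchin's factorization $\m(\hp^2)\to\pi_0\mathcal R^+_{\mathrm{scal}}\to KO^{-9}(\mathrm{pt})$ rather than reconstructing it via the Gromov--Lawson/Gajer cylinder metric, which you do from scratch; the content is the same. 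For part (2) the paper takes a more elementary route than yours: an isometry $f$ of a PSC metric yields an isometric $\z$-action $(t,x)\mapsto(t+1,f(x))$ on $\R\times\hp^2$ with the product metric, whose quotient is $M_f$ with an inherited PSC metric, so $\hat{\mathscr A}(M_f)=0$ directly without ever invoking a path of metrics. Your deduction of (2) from freeness is also valid, but it routes a statement that admits a one-line quotient construction through the comparatively heavier path-to-cylinder argument.
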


\begin{remark}
There are other geometric topological consequences of Theorem \ref{thm:main1}. For example, let $\mathrm{haut}(M)$ be the group of homotopy classes of homotopy equivalences of $M$, then the natural homomorphism 
$$\m(M) \to \mathrm{haut}(M)$$ 
is trivial, since any local diffeomorphism is homotopic to the identity. By \cite[\S 9]{Baues} and \cite[\S 6]{EellsKuiper62}, the group  $\mathrm{haut}(M)$ is isomorphisc to $\z/2$. Therefore there is an ``exotic" self-homotopy equivalence of $M$ which is not homotopic to any diffeomorphism. Theorem \ref{thm:main1} also implies  that the action of $\mathrm{haut}(M)$ on the smooth structure set $\mathscr S^{\mathrm{DIFF}}(M)$ is free.
\end{remark}

To state the results for $\op^2$-like manifolds we recall some facts about the stable homotopy groups of sphere $\pi_*^S$. We follow the notations in \cite[Chap XIV]{Toda}: 
there are elements $\nu \in \pi_3^S$ of order $8$ and $\kappa \in \pi_{14}^S$ of order $2$, their product $\nu  \kappa$ generates a $\z/2$-direct summand of $\pi^S_{17}$; there is an element $\bar \mu \in \pi_{17}^S$ of order $2$, generating another $\z/2$-direct summand of $\pi_{17}^S$.  For more information of $\pi_{17}^S$ see Lemma \ref{kerHurewicz}. There is a split short exact sequence
$$ 0 \to bP_{18} \to \Theta_{17} \to \mathrm{Coker}J \to 0$$
where $J \colon \pi_{17}(O) \to \pi_{17}^S$ is the $J$-homomorphism. 

\begin{Theorem}\label{thm:main2}
Let $M$ be an $\op^2$-like manifold,  then the mapping class group $\m(M)$ is isomorphic to $(\z/2)^3$. More precisely, there is a split central extension
$$ 0 \to K \to \m(M) \to \z/2 \to 0$$
where $K$ is isomorphic to $\z/2 \oplus \z/2$, generated by  local diffeomorphisms whose images are $\bar \mu$ and $\nu \circ \kappa$ respectively.
\end{Theorem}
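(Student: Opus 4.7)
The plan is to extend the strategy of Theorem~\ref{thm:main1} to the $16$-dimensional case, where $\pi_{17}^{S}$ is rich enough to accommodate both extra local generators and one genuinely ``large'' diffeomorphism. Fix an embedded closed disc $D^{16} \subset M$ and set $W := M \setminus \mathrm{int}(D^{16})$. By the Eells--Kuiper/Kramer--Stolz structure, $W$ is diffeomorphic to the closed disc bundle $D(\xi)$ of the rank-$8$ octonionic Hopf bundle $\xi \to S^{8}$ (possibly after absorbing an exotic $S^{15}$ into the boundary gluing). The restriction fibration $\mathrm{Diff}_{\partial}(W) \to \mathrm{Diff}(M) \to \mathrm{Emb}^{+}(D^{16}, M) \simeq \mathrm{Fr}^{+}(TM)$, together with $\pi_{0}\mathrm{Emb}^{+}(D^{16}, M) = \ast$, produces a right-exact sequence
\[
\pi_{1}\mathrm{Emb}^{+}(D^{16},M) \longrightarrow \pi_{0}\mathrm{Diff}_{\partial}(W) \longrightarrow \m(M) \longrightarrow 1,
\]
so it suffices to compute the middle term modulo the boundary image.

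The computation of $\pi_{0}\mathrm{Diff}_{\partial}(W)$ I would carry out by analysing how a diffeomorphism acts on the $S^{8}$ zero section and on its normal bundle. After pseudo-isotopy reductions one encounters $\Theta_{17}$ together with ``twist'' classes pulled from loops in $SO(8)$, whose obstructions to extending over the top $16$-cell land in $\pi_{17}^{S}$ via the $J$-homomorphism composed with the octonionic Hopf attaching map. Using the splitting $\Theta_{17} \cong bP_{18} \oplus \mathrm{Coker}\,J$ and Lemma~\ref{kerHurewicz}, I expect to show that the Kervaire-sphere generator of $bP_{18}$ and every $\eta$-divisible summand of $\mathrm{Coker}\,J_{17}$ bound parallelizable objects that extend across $M$ and hence die in $\m(M)$, while the classes represented by $\bar\mu$ and $\nu \circ \kappa$ genuinely survive. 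This identifies the image of $\Theta_{17} \to \m(M)$ with $K \cong \z/2 \oplus \z/2$, realised by the local diffeomorphisms of the statement.

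For the quotient, I would consider the homomorphism $\alpha \colon \m(M) \to \z/2$ defined by $f \mapsto \hat{\mathscr A}(M_{f})$; since $M$ is spin (its cohomology is concentrated in degrees $0,8,16$ so $w_{2}=0$) and $\dim M_{f} = 17$, this is well-defined. The aim is to show $\ker \alpha = K$ and to exhibit an explicit diffeomorphism $g$ with $\alpha(g) \ne 0$: a natural candidate is a self-diffeomorphism of $D(\xi)$ obtained by twisting along a suitable class in $\pi_{8}SO(8)$ and then extending across the top cell. The extension $0 \to K \to \m(M) \to \z/2 \to 0$ is then automatically central, since $\z/2$ is abelian and $K$ sits inside $\ker \alpha$; a splitting is obtained by replacing any chosen lift of the generator by its product with a suitable element of $K$ so that the result has order two in $\m(M)$.

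The principal technical obstacle will be the middle step: the same group $\pi_{17}^{S}$ that contains the $K$-generators also houses the obstructions to the twist construction, so one has two overlapping sources of $\z/2$-summands that must be disentangled. Concretely, one must identify precisely which obstructions lie in the image of $\pi_{1}\mathrm{Emb}^{+}(D^{16}, M)$---namely those absorbed by the $\pi_{1}SO(16) = \z/2$ factor together with the stabilisation maps---and which descend to genuine generators of $\m(M)$; matching this combinatorial picture with the $\hat{\mathscr A}$-detection on the $17$-dimensional mapping tori is where the essential work lies.
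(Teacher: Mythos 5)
Your starting fibration is fine: the Cerf--Palais sequence you write down is equivalent to the paper's lift through $\m(M,\mathrm{rel}\,D)$, and both roads lead to the same two-part problem. But the proposal has a concrete error at the quotient step and is too vague at the core computation.

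The fatal problem is your claim that $\alpha\colon f\mapsto \hat{\mathscr A}(M_f)$ has kernel exactly $K$. For a local diffeomorphism $\varphi$ with associated homotopy sphere $\Sigma_\varphi$, the mapping torus is $M_\varphi \cong (S^1\times M)\,\sharp\,\Sigma_\varphi$ with the bounding spin structure on the $S^1$-factor, so $\hat{\mathscr A}(M_\varphi)=\hat{\mathscr A}(\Sigma_\varphi)$. By Lemma~\ref{kerHurewicz} the generator $\bar\mu$ has \emph{nontrivial} $\alpha$-invariant (it is a Hitchin sphere), so the local diffeomorphism with image $\bar\mu$ does \emph{not} lie in $\ker\alpha$. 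Hence $K\not\subset\ker\alpha$, the sequence $0\to K\to\m(M)\to\z/2\to 0$ cannot be obtained via $\alpha$, and the whole architecture of the last paragraph collapses. In the paper the quotient $\z/2$ does not arise from the $\hat{\mathscr A}$-invariant at all: it is $\ker\bigl(\gamma\colon\m(N)\to I(M)\bigr)$, i.e.\ a bundle automorphism of the normal disc bundle $N=D(\xi)$ of the core $S^8$ which restricts to the identity on $\partial N=S^{15}$, detected by computing $\mathrm{aut}(\xi)\cong\pi_0\,\g(P(\xi))$ via Samelson products in $SO(8)$.

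The middle step is also underspecified in a way that matters. You gesture at ``twist classes'' and ``obstructions landing in $\pi_{17}^S$,'' but you never identify the mechanism that actually decides which elements of $\Theta_{17}$ survive in $\m(M)$. The paper's machinery here is not optional decoration: the kernel of the local-diffeomorphism map is identified with the inertia group $I(X)$ of the $17$-manifold $X$ obtained by surgery on $S^1\times D\subset S^1\times M$, and $I(X)$ is computed by modified surgery over the normal $8$-type $B_{2t+1,8}$, reducing to the $\mathrm{String}$-bordism Hurewicz map together with a $\mathrm{mod}\,2$ surgery-obstruction argument using Brown's generalized Kervaire invariant. Without this (or an equivalent replacement), the assertion that $\bar\mu$ and $\nu\kappa$ ``genuinely survive'' while the other summands ``die'' is an expectation, not a proof. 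Finally, note that $\Theta_{17}\cong bP_{18}\oplus\mathrm{Coker}\,J$ does not by itself tell you that $bP_{18}$ dies in $\m(M)$; this needs Proposition~\ref{prop:ker}/\ref{prop:inertia} or Cerf's pseudo-isotopy theorem as deployed in the paper.
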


\medskip

In the remaining part of the introduction we briefly describe the strategy used to determine the mapping class group of $M$.
The embedding theorem of Haefliger and the $h$-cobordism theorem of Smale provide a geometric description of projective-plane like manifolds (c.~f.~\cite[\S1]{KramerStolz}).  Let $M^{2m}$ ($m=4,8$) be a projective plane like manifold, by Haefliger's embedding theorem, there is an embedding $S^m \to M$, unique up to isotopy, representing a generator of $H_m(M;\z) \cong \z$. Let $N$ be a closed tubular neighborhood of $S^m$, which is the disc-bundle of the normal bundle $\xi$ of  $S^m$. The $h$-cobordism theorem implies $M=N \cup_{S^{2m-1}} D^{2m}$.  The mapping class groups of $M$ and $N$ are related by the following exact sequence (\cite[Theorem 3] {Wall2})  
\begin{equation}\label{sequence}
\m(D^{2m}, \partial) \stackrel{\alpha}{\longrightarrow}  \m(M) \stackrel{\beta}{\longrightarrow} \m(N)\stackrel{\gamma}{\longrightarrow} I(M) \rightarrow 0
\end{equation}
Here $\alpha$ is given by local diffeomorphism; by the Disk Theorem, any diffeomorphism $f$ of $M$ is isotopic to a diffeomorphism $f_1$ which fixes $D$,  $\beta(f)$ is the restriction of $f_1$ on $N$; $I(M)=\{ \Sigma \in \Theta_{2m} \ | \ M \sharp \Sigma \cong M \} $ is the inertia group of $M$, and $\gamma$ is the restriction of a diffeomorphism of $N$ on $\partial N = S^{2m-1}$.
It's shown in \cite[Theorem A]{KramerStolz} that 
$$I(M) = \Theta_{2m} \cong  \z/2.$$ 
The group $\m(D^{2m}, \partial)$ is identified by the glueing construction with the group of homotopy $(2m+1)$-spheres $\Theta_{2m+1}$, we denote the homomorphism corresponding to $\alpha$ by $\phi \colon \Theta_{2m+1} \to \m(M)$. Therefore the task here is to understand the homomorphisms $\gamma$ and $\phi$. This will be done in \S \ref{sec:bundle} and \S\ref{sec:local} respectively. Finally in \S \ref{sec:proof} we prove the theorems.

Manifolds which are like projective planes are  $(m-1)$-connected 
$2m$-manifolds. 
A classical computation of the mapping class group of $(m-1)$-connected $2m$-manifolds was given in \cite{Kreck78}, where the manifolds are assumed to be almost parallelizable.  Projective plane-like manifolds have non-trivial Pontrjagin class hence are not almost parallelizable. As an effort to extend Kreck's result to non almost-parallelizable highly-connected manifolds some new ideas and techniques are brought into play in this paper.  

\

\noindent \textbf{Acknowledgement.} The first author is partially supported by NSFC 12071462. 
 
\section{Bundle automorphisms} \label{sec:bundle}
In this section we first determine the group $\m(N)$ and the homomorphism $\gamma$.  Recall that $N$ is a closed disk bundle of $\xi$. 
Let $\mathrm{aut}(\xi)$  be the isotopy classes of orientation-preserving bundle automorphisms of $\xi$. A bundle automorphism clearly induces a diffeomorphism of $N$, we have a well-defined homomorphism $\mathrm{aut}(\xi) \to \m(N)$.  

\begin{Lemma}
The homomorphism $\mathrm{aut}(\xi) \to \m(N)$ an isomorphism.
\end{Lemma}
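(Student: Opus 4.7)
The plan is to prove the homomorphism $\mathrm{aut}(\xi) \to \m(N)$ is both surjective and injective on $\pi_0$, using Haefliger's embedding theorem (and its parametric form) together with the tubular neighborhood theorem as the main ingredients.

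For surjectivity, I would start from $[f] \in \m(N)$ and use the fact that the zero section $S^m \subset N$ generates $H_m(N;\z) \cong \z$, so $f(S^m)$ is an embedded $m$-sphere in $N^{2m}$ representing a generator of this group. Haefliger's uniqueness theorem for embeddings of $S^m$ into $2m$-manifolds, which applies in the metastable range relevant to $m = 4, 8$, provides an isotopy of $N$ carrying $f(S^m)$ back to $S^m$; using isotopy extension we may then replace $f$ within its isotopy class by a diffeomorphism with $f(S^m) = S^m$. Writing $g := f|_{S^m} \in \mathrm{Diff}(S^m)$, the fibrewise derivative of $f$ along $S^m$ defines a bundle isomorphism $\xi \to g^*\xi$, hence a bundle automorphism $\w f \in \mathrm{aut}(\xi)$. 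Because $N = D(\xi)$ is itself a closed tubular neighborhood of $S^m$ and $f$ and $\w f$ agree to first order along $S^m$, the uniqueness part of the tubular neighborhood theorem yields an ambient isotopy from $f$ to $\w f$.

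For injectivity, I would take two bundle automorphisms $F_0, F_1 \in \mathrm{aut}(\xi)$ joined by an isotopy $f_t \in \mathrm{Diff}(N)$ and apply the parametric form of each step: first, a parametric version of Haefliger's theorem deforms $f_t$ rel endpoints so that $f_t(S^m) = S^m$ for every $t$; then a parametric tubular neighborhood uniqueness deforms the resulting path further into a path of bundle automorphisms connecting $F_0$ to $F_1$ in $\mathrm{aut}(\xi)$. The principal obstacle I expect is the parametric Haefliger step: it requires the vanishing of a relative $\pi_1$ of $\mathrm{Emb}(S^m, N)$, which is deeper than the $\pi_0$-statement used for surjectivity. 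Once the path has been positioned on the zero section the tubular neighborhood step is essentially a parametric formality, reducing pointwise to the unparametric case already handled.
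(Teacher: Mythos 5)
There is a genuine gap in both halves of your proposal, and the paper's actual proof takes a different route at the crucial points.

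For surjectivity, you stop at arranging $f(S^m)=S^m$ and then write $g:=f|_{S^m}$; but a bundle isomorphism $\xi\to g^*\xi$ is a bundle map of $\xi$ \emph{covering $g$}, not an element of $\mathrm{aut}(\xi)$, which in this paper means the gauge group of $\xi$ (maps covering $\mathrm{id}_{S^m}$). For $m=8$ the group $\pi_0\mathrm{Diff}^+(S^8)\cong\Theta_9$ is nontrivial, and for $m=4$ the corresponding statement is the open smooth Poincar\'e conjecture, so you cannot simply isotope $g$ away. You actually need the full strength of Haefliger's isotopy theorem: two homotopic \emph{embeddings} of $S^m$ into $N$ are isotopic as embeddings (not just as subsets), and since $f$ preserves $p_{m/4}$ it is the identity on $H_m(N)$, so $f|_{S^m}$ and the inclusion are homotopic and hence ambient isotopic. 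This is exactly how the paper arranges $f|_{S^m}=\mathrm{id}$ before invoking the tubular neighborhood theorem, and it is the step your write-up omits.

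For injectivity you propose a parametric form of Haefliger's theorem, acknowledging yourself that it would require control of a relative $\pi_1$ of the embedding space; you do not resolve this and the paper does not need it. Instead the paper splits into cases. For $m=8$ it builds an explicit inverse $\m(N)\to\mathrm{aut}(\xi)$ by differentiating along $S^8$; to show this is well defined it takes an isotopy $Q\colon N\times I\to N\times I$ between $f$ and $g$ and applies the \emph{unparametric} Haefliger isotopy theorem, one dimension up, to the embedding $S^8\times I\subset N\times I$ (here $2\cdot 17>3\cdot 10$, so Haefliger applies) to make $Q|_{S^8\times I}=\mathrm{id}$ rel boundary; then $dQ$ is the desired isotopy of gauge automorphisms. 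This sidesteps the $\pi_1$-vanishing you were worried about. For $m=4$ this trick is out of the Haefliger range ($S^4\times I\subset N^8\times I$ needs $2\cdot 9>3\cdot 6$, which fails), and the paper instead uses an order-counting argument: $\mathrm{aut}(\xi)\to\m(N)$ is already known to be surjective, $\mathrm{aut}(\xi)\cong\z/2$ by Lemma \ref{autxi}, and $\gamma\colon\m(N)\to I(M)\cong\z/2$ is onto, which forces the map to be an isomorphism. Your plan makes no case distinction and would run aground at $m=4$.
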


\begin{proof}
A diffeomorphism $f$ necessarily preserves the Pontrjagin class $p_{m/4}(M)$ hence induces the identity on $H_m(N)$. The zero-section $S^m \subset N$ represents a generator of $H_{m}(N)$, after an isotopy we may assume that $f|S^m=\mathrm{id}$.  By the tubular neighborhood theorem, such a diffeomorphism is isotopic to one induced by a bundle automorphism of $\xi$. Therefore the homomorphism $\mathrm{aut}(\xi) \to \m(N)$ is surjective.  When $m=4$, by the facts that $\mathrm{aut}(\xi) \cong \z/2$ (Lemma \ref{autxi}) and $\gamma \colon \m(N) \to I(M) = \Theta_8 \cong \z/2$ is surjective, $\mathrm{aut}(\xi) \to \m(N)$ must be an isomorphism. 

When $m=8$ we define an inverse homomorphism $\m(N) \to \mathrm{aut}(\xi)$ as follows. For any diffeomorphism $f \colon N \to N$ with $f|_{S^8} = \mathrm{id}$,  the differential $df$ of $f$ induces a bundle automorphism of $\xi$. We need to show that the isotopy class of $df$ depends only on the isotopy class of $f$. Let $Q \colon N \times I \to N \times I$ be an isotopy between $f$ and $g$ with $f|_{S^8}$=$g|_{S^8}=\mathrm{id}$. The induced isomorphism $Q_*$ on $H_9(N \times I, N \times \{0,1\})$ is the identity, therefor $Q|_{S^8 \times I}$ is homotopic to the identity relative to $S^8 \times \{0,1\}$. By Haefliger's embedding theorem \cite[Theorem 1(b)]{Haef},  $Q|_{S^8 \times I}$ is isotopic to the identity relative to $S^8 \times \{0,1\}$. Modifying $Q$ by an isotopy, we may assume that $Q|_{S^8 \times I}=\mathrm{id}$. Then the differential $dQ$ gives an isotopy  between $df$ and $dg$.

The composition $\mathrm{aut}(\xi) \to \m(N) \to \mathrm{aut}(\xi)$ is by definition the identity. The composition $\m(N) \to \mathrm{aut}(\xi) \to \m(N)$ is the identity by \cite[Lemma 9(ii)]{Wall2}.
\end{proof}

Next we compute the group of bundle automorphisms $\mathrm{aut}(\xi)$. From the correspondence between the vector bundle $\xi$ and its associated principal $SO(m)$-bundle $P(\xi)$, it's easy to see that the group of bundle automorphisms of $\xi$ is identified with the group of principal bundle automorphisms of $P(\xi)$, i.~e.~the gauge group $\g (P(\xi))$. Therefore $\mathrm{aut}(\xi) \cong \pi_0(\g(P(\xi))$.

For the convenience of the reader we briefly recall  the homotopy theory of gauge groups, for more details see \cite{AtiyahBott,Gottlieb,Theriault}. For a principal $G$-bundle  $P \to B$, let $\g^P_G(B)$ be its gauge group, then the classifying spaces of the topological group $\g^P_G(B)$ is homotopy equivalent to the connected component $\mathrm{Map}_P(B,BG)$ of the mapping space $\mathrm{Map}(B,BG)$ which contains the classifying map of $P$. The homotopy equivalences 
$$\g^P_G(B)\simeq \Omega B\g^P_G(B)\simeq 
\Omega \mathrm{Map}_P(B,BG)$$
induce an isomorphism $\pi_0 \g^P_G(B)\cong \pi_1 \mathrm{Map}_P(B,BG)$. 
There is a fibration sequence 
$$ \Omega BG  \to \mathrm{Map}^*_P(B,BG) \to \mathrm{Map}_P(B,BG)\stackrel{\mathrm{ev}}{\longrightarrow} BG $$
where $\mathrm{ev}$ is the evaluation map, $\mathrm{Map}^*_P(B,BG)$ is the corresponding space of pointed maps, and $\Omega BG$ is homotopy equiavlent to $G$. We have an associated exact sequence of homotopy groups
$$ \pi_1(G) \stackrel{\partial}{\longrightarrow} \pi_1(\mathrm{Map}^*_P(B,BG)) \to \pi_1(\mathrm{Map}_P(B,BG)) \to \pi_0(G)$$
For $B=S^n$ and $G$ connected, there are homotopy equivalences 
$$\mathrm{Map}^*_P(S^n,BG)\simeq \Omega^{n}(BG)\simeq \Omega^{n-1}G$$ 
and we get an exact sequence
\begin{equation}\label{sequence2}
\pi_1(G)\stackrel{s_P}{\longrightarrow} \pi_n(G) \to \pi_0(\g^P_G(B)) \to 0
\end{equation}
The homomorphism $s_P \colon \pi_1(G) \to \pi_n(G)$ can be described as follows. 
For a topological group $G$, the commutator map $G \times G \to G$, $(g_1, g_2) \mapsto [g_1, g_2]$  descends to  a map
$c \colon G\wedge G\rightarrow G$, which induces a bilinear pairing 
$$
\langle -, -\rangle :\pi_n (G) \otimes \pi_m (G) \rightarrow \pi_{n+m} (G).
$$
called the \emph{Samelson product} \cite{Bott}. Let  $[f_P]\in \pi_{n-1}(G)$ be the clutching function of the principal bundle $P$ over $S^n$, then $s_P$ is identified with the Samelson product 
$$ s_P= \langle -,[f_P] \rangle:\pi_1(G) \rightarrow \pi_n (G)$$ 
see \cite[Theorem 2.7]{Lang} and \cite[Page 249]{Bott}.

Thus the computation of $\mathrm{aut}(\xi)$ is reduced to the computation of the corresponding Samelson product 
$$\langle - , - \rangle \colon \pi_1SO(m) \otimes \pi_{m-1}SO(m) \to \pi_mSO(m)$$ for $m=4,8$.  Let $SO(n) \to S^{n-1}$ be the evaluation map, 
$$p_n \colon \pi_{n-1}SO(n) \to \pi_{n-1}(S^{n-1})$$ 
be the induced homomorphism. 

\begin{Proposition}\label{prop:samelson}
Let $x \in \pi_{m-1}SO(m)$ such that $p_m(x) = \pm \mathrm{id}_{m-1}$, then the Samelson product 
$$ \langle - , x \rangle \colon \pi_1SO(m) \to \pi_mSO(m)$$
is non-trivial.
\end{Proposition}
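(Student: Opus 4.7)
Let $\iota$ denote the generator of $\pi_1 SO(m)\cong \z/2$; we must show $\langle \iota, x\rangle\ne 0$ in $\pi_m SO(m)$. The strategy is to detect this class via its image under the induced map $p_{m*}\colon \pi_m SO(m)\to \pi_m(S^{m-1})\cong \z/2$.

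First, I would argue that $p_{m*}\langle \iota,x\rangle$ is independent of the choice of $x$ with $p_m(x)=\pm\mathrm{id}_{m-1}$. Indeed, any two such elements differ by one coming from $\pi_{m-1}SO(m-1)\to \pi_{m-1}SO(m)$; by bilinearity of the Samelson product and naturality under $SO(m-1)\hookrightarrow SO(m)$, together with the fact that the composition $SO(m-1)\hookrightarrow SO(m)\stackrel{p_m}{\to} S^{m-1}$ is null-homotopic, the difference $\langle \iota, x\rangle-\langle \iota, x'\rangle$ lies in $\ker p_{m*}$. It therefore suffices to exhibit one concrete $x_0$ for which $p_{m*}\langle \iota, x_0\rangle$ is non-zero.

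I would take $x_0\colon S^{m-1}\to SO(m)$ to be left multiplication by unit quaternions ($m=4$) or unit octonions ($m=8$), and represent $\iota$ by the loop of rotations by angle $s$ in a fixed $2$-plane. The spin cover produces a lift $s\mapsto e^{is/2}$ in $\Spin(m)$, which is a path from $1$ to $-1$. A direct computation of the commutator $\iota(s)\, x_0(t)\,\iota(s)^{-1}\, x_0(t)^{-1}$ followed by evaluation at the basepoint $e\in S^{m-1}$ yields
\[
p_{m*}\langle \iota, x_0\rangle(s,t)\;=\; e^{is/2}\, t\, e^{-is/2}\, t^{-1}\in S^{m-1}.
\]
I would then show that this commutator map $\tilde\phi\colon S^1\wedge S^{m-1}\to S^{m-1}$ represents the generator $\eta\in \pi_m(S^{m-1})\cong \z/2$, for example by verifying that $\mathrm{Sq}^2$ is non-trivial on the mod-$2$ cohomology of the mapping cone of $\tilde\phi$.

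The main obstacle I expect is this last identification. For $m=4$ it reduces to a direct quaternionic check: $\tilde\phi$ factors through the adjoint $U(1)$-action on $Sp(1)$, and composing with the Hopf projection $S^3\to S^2$ gives a representative of $\eta_2\in\pi_4(S^2)$. For $m=8$, the corresponding verification is more delicate because of the non-associativity of the octonions and the need to work inside $\Spin(8)$; here one might bypass the explicit computation by using that the stable Samelson product in $\pi_8 SO\cong \z/2$ is non-zero (by a Bott-periodicity argument), which then serves as a detector for the unstable class.
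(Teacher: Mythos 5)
Your proposal takes a genuinely different route from the paper. The paper detects $\langle\alpha_m, x\rangle$ by pushing it \emph{forward} along $i_*\colon\pi_m SO(m)\to\pi_m SO(m+1)$: using Bott's formula, the composite $SO(2)\wedge SO(m)\to SO(m+1)\wedge SO(m+1)\to SO(m+1)$ is identified with $\Delta\circ\lambda^E_*\circ E\circ(p_2\wedge p_m)$, and the hypothesis $p_m(x)=\pm\mathrm{id}_{m-1}$ feeds $\pm\mathrm{id}_{m+1}$ into $\Delta\colon\pi_{m+1}S^{m+1}\to\pi_m SO(m+1)$, which is nonzero for $m=4,8$ because $S^{m+2}$ is not parallelizable. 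You instead propose to push the class \emph{down} along $p_{m*}\colon\pi_m SO(m)\to\pi_m S^{m-1}$. Your independence-of-$x$ step is fine (it is the same observation that the ambiguity in $x$ factors through $SO(m-1)$, which is killed by $p_m$, used in reverse to the paper's), and for $m=4$ the quaternionic commutator computation is plausible, though the crucial step — that $(s,t)\mapsto e^{is/2}te^{-is/2}t^{-1}$ represents $\eta\in\pi_4 S^3$ — is stated as an intention rather than carried out.

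The $m=8$ case, however, has a genuine gap, and the proposed fallback is actually false. The stable Samelson product $\pi_1 SO\otimes\pi_7 SO\to\pi_8 SO$ is \emph{zero}: $SO$ is an infinite loop space, so $BSO$ is an $H$-space, hence all Whitehead products in $BSO$ — equivalently all Samelson products in $SO$ — vanish. This is not a minor technicality but is structurally forced by the paper's own argument: $i_*\langle\alpha_m,x\rangle=\pm\Delta(\mathrm{id}_{m+1})$ lies in the image of $\Delta\colon\pi_{m+1}S^{m+1}\to\pi_m SO(m+1)$, which is precisely the kernel of the next stabilization $\pi_m SO(m+1)\to\pi_m SO(m+2)$. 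So $\langle\alpha_m,x\rangle$ is an \emph{unstable} phenomenon that dies already in $\pi_m SO(m+2)$; no stable invariant can detect it. You would instead have to carry out the octonionic commutator computation inside $SO(8)$ (or $\Spin(8)$) honestly and show the resulting map $S^1\wedge S^7\to S^7$ is $\eta$, which is exactly the ``delicate'' step you were hoping to avoid. The paper's Bott-product reduction sidesteps this entirely and treats $m=4$ and $m=8$ uniformly, which is why it is the cleaner route.
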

\begin{proof}
For $p < q$, let $i \colon SO(p) \to SO(q)$ and $i' \colon SO(p) \to SO(q)$ be the inclusion induced by the inclusion of the first or last $p$ coordinates of the Euclidean spaces respectively, then we have a diagram commutative up to homotopy
$$\xymatrix{
SO(2) \wedge SO(m) \ar[r]^{i \wedge i' \ \ \ \ \ \ } \ar[d]^{i \wedge \mathrm{id}} & SO(m+1) \wedge SO(m+1) \ar[r]^{\ \ \ \ \ \ \ \ c} & SO(m+1) \\
SO(m) \wedge SO(m) \ar[r]^{ \ \ \ c} & SO(m) \ar[ur]^i & }$$
Let $\alpha_2\in \pi_1SO(2)$ be a generator, then $i_{*}(\alpha_2)=\alpha_m$ is the generator of $\pi_1SO(m)$. We will show $(c \circ (i \wedge i'))_*(\alpha_2 \otimes x)$ is non-trivial, then by the above diagram $\langle \alpha_m, x \rangle$ is non-trivial.

By \cite[Proposition 2.1 and 2.2]{Bott} the induced homomorphism in homotopy by $c \circ (i \wedge i')$ has the following form
$$(c \circ (i \wedge i'))_*=\Delta \circ \lambda^E_* \circ E \circ (p_2 \wedge p_m)$$
where $E$ is the suspension homomorphism, $\lambda^E \colon S^{m+1} \to S^{m+1}$ is a degree $1$ map, and $\Delta:\pi_{m+1}(S^{m+1})\rightarrow \pi_{m}SO(m+1)$ is the boundary homomorphism of the fibration $SO(m+1)\rightarrow SO(m+2)\rightarrow S^{m+1}$. 
Now since $p_1(\alpha_2)=\mathrm{id}_1$, $p_m(x)=\pm \mathrm{id}_{m-1}$, and the degree of $\lambda^E$ is $1$, we have 
$$ \lambda^E_*\circ E \circ (p_1 \wedge p_{m})(\alpha_2\otimes x)=\pm [\mathrm{id}_{m+1}]$$ 
It's well-known that the boundary homomorphism $\Delta$ is non-trivial for $m=4,8$ (\cite{Kervaire60}). Therefore $(c \circ (i \wedge i'))_*(\alpha_2 \otimes x)=\pm \Delta(\mathrm{id}_{m+1})$ is non-trivial.
\end{proof}

\begin{Lemma}\label{autxi}
When $m=4$, $\mathrm{aut}(\xi) \cong \z/2$; when $m=8$, $\mathrm{aut}(\xi) \cong \z/2 \oplus \z/2$.
\end{Lemma}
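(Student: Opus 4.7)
The plan is to apply the exact sequence (\ref{sequence2}) with $G = SO(m)$ and principal bundle $P = P(\xi)$, which identifies $\mathrm{aut}(\xi) \cong \pi_0 \g(P(\xi))$ with the cokernel of the Samelson product
$$ s_P = \langle -, [f_P] \rangle \colon \pi_1 SO(m) \to \pi_m SO(m), $$
where $[f_P] \in \pi_{m-1} SO(m)$ is the clutching function of $\xi$.

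First I would verify the hypothesis of Proposition \ref{prop:samelson}, namely that $p_m([f_P]) = \pm \mathrm{id}_{m-1}$. Since $M = N \cup_{S^{2m-1}} D^{2m}$ and $H^*(M) = \z[x]/(x^3)$ with $x^2$ generating $H^{2m}(M)$, the self-intersection of the zero section $S^m \subset M$ equals $\pm 1$, so the Euler number of $\xi$ is $\pm 1$. Under the standard identification of the Euler number of a rank-$m$ oriented vector bundle over $S^m$ with the degree of $\mathrm{ev} \circ f_P \colon S^{m-1} \to SO(m) \to S^{m-1}$, this is exactly the statement $p_m([f_P]) = \pm \mathrm{id}_{m-1}$. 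Proposition \ref{prop:samelson} then gives $s_P \neq 0$; since $\pi_1 SO(m) = \z/2$, the map $s_P$ is injective with image of order $2$.

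Next I would compute $\pi_m SO(m)$ and take the quotient. For $m = 4$, the spin double cover $\mathrm{Spin}(4) \cong S^3 \times S^3 \to SO(4)$ gives $\pi_4 SO(4) \cong \pi_4(S^3) \oplus \pi_4(S^3) \cong (\z/2)^2$, and the quotient by the order-$2$ image of $s_P$ is $\z/2$. For $m = 8$, the target is $\pi_8 SO(8) \cong (\z/2)^3$, which I would extract from Kervaire's computations \cite{Kervaire60} (or the Mimura--Toda tables) via the fibrations $SO(7) \to SO(8) \to S^7$ and $SO(8) \to SO(9) \to S^8$, together with the stable value $\pi_8 SO = \z/2$ and the triality of $\mathrm{Spin}(8)$; once this is in hand, the quotient by any order-$2$ subgroup is $(\z/2)^2$.

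The main obstacle is the $m = 8$ case: verifying that $\pi_8 SO(8)$ is elementary abelian of rank three rather than, say, $\z/4 \oplus \z/2$, since in the latter situation one would also need to identify $s_P(\alpha_8)$ precisely to rule out a $\z/4$ quotient. Tracking the boundary maps $\pi_{k+1}(S^k) \to \pi_k SO(k)$ through the unstable orthogonal fibrations in order to pin down the $2$-primary structure of $\pi_8 SO(8)$ is the step where I expect to spend most of the effort.
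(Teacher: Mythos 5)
Your proposal follows essentially the same route as the paper: apply the exact sequence (\ref{sequence2}), observe that the Euler number of $\xi$ is $\pm 1$ so the clutching function satisfies the hypothesis of Proposition \ref{prop:samelson}, conclude $s_P\neq 0$, and quotient $\pi_m SO(m)$ (which is $(\z/2)^2$ for $m=4$ and $(\z/2)^3$ for $m=8$, cited from Kervaire) by the order-$2$ image. The paper simply cites \cite{Kervaire60} for the elementary-abelian structure of $\pi_8 SO(8)$ rather than rederiving it, so the concern you flag at the end is already resolved by the reference.
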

\begin{proof}
The exact sequence \ref{sequence2} reduces to
$$ \pi_1SO(m) \stackrel{s_P}{\rightarrow} \pi_{m}SO(m) \to \mathrm{aut}(\xi) \to 0.$$
we have $\pi_4SO(4) \cong (\z/2)^2$ and $\pi_8SO(8) \cong (\z/2)^3$ (c.f. \cite{Kervaire60}).
The Euler class of $\xi$ is $\pm 1$, which is equivalent to that, for the clutching function of $\xi$ in $\pi_{m-1}SO(m)$, its image in $\pi_{m-1}(S^{m-1})$ is $\pm \mathrm{id}$. By Proposition \ref{prop:samelson} $s_P$ is non-trivial. This finishes the proof.
\end{proof}

\begin{example}
Consider the automorphism $\tau$ of $\xi$ which is the antipodal involution on each fiber. It is shown \cite[Theorem 1]{TangZhang} that the induced diffeomorphism on $\partial D(\xi) = \partial N = S^7$ is isotopic to the antipodal map on $S^7$, hence is isotopic to the identity. Therefore from the isomorphisms $\mathrm{aut}(\xi) \cong \m(N) \cong I(M)$ we see $\tau=0 \in \mathrm{aut}(\xi)$.
\end{example}

\section{local diffeomorphims}\label{sec:local}
By the exact sequence (\ref{sequence}), the next step toward the determination of  $\m(M)$ is to study the homomorphism $\phi \colon \Theta_{2m+1} \to \m(M)$. To this end we first consider a variant of the mapping class group, namely the mapping class group relative to a disc.  Let $D \subset M$ be a closed embedded $2m$-disc, $\m(M, \mathrm{rel} D)$  be the group of isotopy classes of orientation-preserving diffeomorphism of $M$ which is the identity on $D$. There is an exact sequence relating $\m(M)$ and $\m(M, \mathrm{rel} D)$ (\cite[p.~265]{Wall2})
$$\z/2 \to \m(M,\mathrm{rel}D) \to \m(M) \to 0.$$
The image of $1 \in \z/2$ in $\m(M, \mathrm{rel}D)$ is a Dehn twist in a collar $A \cong S^{2m-1} \times [0,1]$ of $D$. More precisely, let $\alpha \colon [0,1] \to SO(m)$ be a smooth curve which maps a neighborhood of $\{0,1\}$ to the identity matrix, and represent the non-trivial element in $\pi_1SO(m) \cong \z/2$. Let $f(x,t)=(\alpha(t) \cdot x, t)$ be the diffeomorphism of $A$ and extend $f$ to a diffeomorphism of $M$ by the identity elsewhere.  We call this a \emph{boundary diffeomorphism}. 

By considering a diffeomorphism supported in an embedded $2m$-disc $D'$ disjoint from $D$ one clearly obtains a homomorphism  $\bar \phi \colon \Theta_{2m+1} \to \m(M,\mathrm{rel}D)$ lifting the homomorphism $\phi \colon \Theta_{2m+1} \to \m(M)$ 
$$\xymatrix{ & \m(M, \mathrm{rel}D) \ar[d]\\                    \Theta_{2m+1} \ar[ur]^{\bar\phi} \ar[r]^{\phi} & \m(M)}$$
In this section we study the homomorphism $\bar \phi$.

Let $X$ be the result of surgery on $S^{1}\times D \subset S^{1} \times M$, $I(X)$ be the inertia group of $X$. Then
\begin{Proposition}\label{prop:ker}
$\Ker  \bar \phi = I(X)$.
\end{Proposition}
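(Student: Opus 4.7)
The plan is to translate the kernel of $\bar\phi$ into a surgery question. For any $f \in \mathrm{Diff}(M, \mathrm{rel}\, D)$, the mapping torus $M_f$ canonically contains $S^1 \times D$ (since $f|_D = \mathrm{id}$), so surgery on this $S^1 \times D$ produces a closed $(2m+1)$-manifold $\Psi(f)$ whose diffeomorphism type depends only on the class of $f$ in $\m(M, \mathrm{rel}\, D)$; in particular $\Psi(\mathrm{id}) = X$. The central input is the identification
\[
\Psi(\bar\phi(\Sigma)) \cong X \# \Sigma.
\]
To verify this, I would represent $\Sigma$ by $g \in \mathrm{Diff}(D^{2m}, \partial)$ via the gluing construction and observe that $M_{f_\Sigma}$ differs from $S^1 \times M$ only in that the piece $S^1 \times D'$ (where $D' \subset M \setminus D$ supports $f_\Sigma$) is replaced by the twisted disc bundle $D' \times_g S^1$. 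The standard effect of this twisting is to form the connected sum with $\Sigma$, yielding $M_{f_\Sigma} \cong (S^1 \times M) \# \Sigma$ with the connect-sum region chosen disjoint from $S^1 \times D$; surgery on $S^1 \times D$ then produces $X \# \Sigma$.

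Given this identification, the inclusion $\Ker \bar\phi \subseteq I(X)$ is immediate: if $\bar\phi(\Sigma) = 0$, then $f_\Sigma$ is isotopic to the identity relative to $D$, so $M_{f_\Sigma} \cong S^1 \times M$ by a diffeomorphism which is the identity on $S^1 \times D$, and passing to the surgery quotients yields $X \# \Sigma \cong X$, i.e., $\Sigma \in I(X)$.

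For the reverse inclusion, suppose $h \colon X \# \Sigma \to X$ is a diffeomorphism realizing $\Sigma \in I(X)$. The strategy is to undo the surgery: dual surgery on the belt sphere $S_0 := \{0\} \times S^{2m-1}$ inside the attached piece $D^2 \times S^{2m-1}$ recovers $S^1 \times M$ from $X$ and, by the identification above (with the connect-sum region chosen away from $S_0$), recovers $M_{f_\Sigma}$ from $X \# \Sigma$. After composing $h$ with an ambient isotopy, one would arrange that $h$ carries $S_0 \subset X \# \Sigma$ to $S_0 \subset X$ and is standard on a tubular neighborhood; then $h$ descends to a diffeomorphism $M_{f_\Sigma} \cong S^1 \times M$ relative to $S^1 \times D$, from which one extracts a pseudo-isotopy from $f_\Sigma$ to the identity rel $D$. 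Cerf's pseudo-isotopy theorem, valid since $M$ is simply connected of dimension $\geq 8$, then upgrades this to an isotopy, so $\bar\phi(\Sigma) = 0$. The main obstacle is precisely the ambient-isotopy adjustment: arranging $h$ to preserve the codimension-$2$ surgery sphere $S_0$ together with its framing, so that the surgery can be reversed to give a diffeomorphism of mapping tori relative to $S^1 \times D$. Once this step is secured, descent to the mapping torus and the passage to an isotopy via Cerf are essentially routine.
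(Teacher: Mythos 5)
Your forward inclusion $\Ker\bar\phi\subseteq I(X)$ is the same as the paper's: if $\bar\phi(\Sigma)=0$ then $M_{f_\Sigma}\cong S^1\times M$ rel $S^1\times D$, and surgery on $S^1\times D$ in both sides gives $X\#\Sigma\cong X$. The identification $\Psi(\bar\phi(\Sigma))\cong X\#\Sigma$ via $M_{f_\Sigma}\cong (S^1\times M)\#\Sigma$ is also exactly as in the paper.

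For the reverse inclusion your route diverges from the paper's, and it is precisely at the step you flag as ``the main obstacle'' that the argument has a genuine gap. You want to compose the given diffeomorphism $h\colon X\#\Sigma\to X$ with an ambient isotopy so that it carries the belt sphere $S_0\cong S^{2m-1}$ (with its normal framing) to the standard belt sphere in $X$. This is a codimension-$2$ embedded sphere in a $(2m+1)$-manifold, and isotopy uniqueness of embeddings is exactly what one \emph{cannot} invoke in codimension $2$: Haefliger's unknotting theorem, which the paper uses elsewhere, requires codimension $\geq 3$, and codimension-$2$ spheres can be knotted. There is no obvious special structure here that bails you out -- in particular $S_0$ does not bound a disc in $X$ (the disc $\{0\}\times D^{2m}$ it bounds in $D^2\times M$ was removed when forming $X$), so even the class of arguments that unknot null-bounding codimension-$2$ spheres do not apply directly. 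Without this step the surgery cannot be reversed compatibly on both sides, and the descent to a diffeomorphism of mapping tori rel $S^1\times D$ never gets off the ground.

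The paper sidesteps the codimension-$2$ issue entirely. Starting from $X\#\Sigma_f\cong X$, it takes $V$ to be the trace of the surgery on $S^1\times D\subset S^1\times M$, forms $W=V\cup_{S^1\times M}D^2\times M$ with $\partial W=X$, kills $H_2(W)\cong\z$ by a $2$-surgery, and then uses the twisted-double description $X\#\Sigma_f=(N\times I)\cup_{f\cup\mathrm{id}}(N\times I)$ (with $N=M-\mathring D$) to check that $(W,N\times I)$ is an $h$-cobordism rel $\partial$. The $h$-cobordism theorem then yields the pseudo-isotopy from $f_\Sigma$ to $\mathrm{id}$ rel $D$, and Cerf finishes as in your write-up. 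The moral is that converting the question into an $h$-cobordism recognition problem is more robust than trying to normalize the belt sphere by hand; if you want to keep your geometric framing, you need to replace the ``arrange $h$ to preserve $S_0$'' step with a cobordism-theoretic argument of this kind.
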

\begin{proof}
For $f \in \m(D^{2m}, \partial)$, let $\Sigma_f \in \Theta_{2m+1}$ be the corresponding  homotopy sphere, then the mapping torus $M_f$ is diffeomorphic to $ S^1 \times M \sharp \Sigma_f$. Surgery on $S^1 \times D \subset S^1 \times M \sharp \Sigma_f \cong M_f$ produces $X \sharp \Sigma_f$. If $\Sigma_f \in \Ker  \bar \phi$, then $M_f$ is diffeomorphic to $S^1 \times M$ relative to $S^1 \times D$. After surgery on $S^1 \times D$ we have $X \sharp \Sigma_f \cong X$, i.~e.~$\Sigma_f \in I(X)$.

On the other hand, let $V$ be the trace of the surgery on $S^1 \times D \subset S^1 \times M$, $W=V \cup_{S^1 \times M} D^2 \times M$, with $\partial W = X$. Then a generator of $H_2(W) \cong \z$ is represented by an embedded $S^2$ with trivial normal bundle. Do a $2$-surgery on this $S^2$, killing $H_2(W) \cong \z$, still denote the result manifold by $W$. Note  that $X \sharp \Sigma_f = (N \times I)\cup_{f \cup \mathrm{id}}(N \times I)$ where $N = M - \mathring D$ (c.~f.~\cite[p.~657]{Kreck78}). If $X \sharp \Sigma_f \cong X$, an easy computation shows that $(W,N \times I)$ is an $h$-cobordism rel $\partial$. This implies that $f$ is pseudo-isotopic to $\mathrm{id}$ rel $D$, hence also isotopic to $\mathrm{id}$ rel $D$ by Cerf's pseudo-isotopy theorem \cite{Cerf}.
\end{proof}

We compute the inertia group $I(X)$ using the method of modified surgery theory. For basic concepts and results of this theory we refer to \cite{Kreck99} and \cite[\S 2]{KramerStolz}. Let $BO\langle m \rangle$ be the $(m-1)$-connected cover of $BO$,  or more precisely, $BO\langle 4 \rangle= B\mathrm{Spin}$, $BO\langle 8 \rangle=B\mathrm{String}$. Since $X$ is an $(m-1)$-connected $(2m+1)$-manifold with $H^m(X) = H^m(M)$ and $p_{m/4}(X)=p_{m/4}(M)$, there is a unique $BO\langle m \rangle $-structure on its stable normal bundle, and the normal $m$-type of $X$ is the same as that of $M$, which is described in  \cite[Proposition 3.4]{KramerStolz}: let $B_{2t+1,m}$ be the homotopy fiber of the map (unique up to homotopy) 
$BO\langle m \rangle \to K(\z,m)$, whose induced homomorphism $\pi_mBO\langle m \rangle = \z \to \pi_mK(\z ,m) = \z$ is multiplication by $2t+1$,
and let 
$$p \colon B_{2t+1,m} \to BO\langle m \rangle \to BO$$
be the composition, which we regard as a fibration. Then $(B_{2t+1,m},p)$ is the normal $m$-type of $X$, and 
there is a unique lifting $\bar \nu \colon X \to B_{2t+1,m}$ of the normal Gauss map $\nu \colon X \to BO$, which is an $(m+1)$-equivalence. To simplify the notation in the following we denote $B_{2t+1,m}$ by $B$.  Manifolds with a $B$-structure form a bordism group $\Omega_*(B,p)$ (c.~f.~\cite[\S 4]{KramerStolz}). The lifting $\bar \nu \colon X \to B$ gives rise to an element $[X, \bar \nu] \in \Omega_{2m+1}(B,p)$. 
A homotopy sphere $\Sigma$ also has a unique normal $B$-structure $\bar \nu_{\Sigma} \colon \Sigma \to B$ and we have a homomorphism $\Theta_{2m+1} \to \Omega_{2m+1}(B,p)$.

\begin{Proposition}\label{prop:inertia}
$I(X)=\Ker(\Theta_{2m+1} \to \Omega_{2m+1}(B,p))$
\end{Proposition}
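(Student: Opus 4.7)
My strategy is to apply Kreck's modified surgery theory, with the proof naturally splitting into the two inclusions.

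For the easy inclusion $I(X) \subseteq \Ker(\Theta_{2m+1} \to \Omega_{2m+1}(B,p))$, I would start from a diffeomorphism $\phi \colon X \sharp \Sigma \stackrel{\cong}{\to} X$ and use that the canonical $B$-structure $\bar\nu$ (as an $(m+1)$-equivalence to $B$) is unique on any $(m-1)$-connected $(2m+1)$-manifold with the prescribed Pontrjagin class. Hence $\phi$ is a $B$-diffeomorphism up to a homotopy of lifts, giving $[X \sharp \Sigma, \bar\nu_{\#}] = [X, \bar\nu]$ in $\Omega_{2m+1}(B,p)$. Combined with the additivity of connect sums in bordism, $[X \sharp \Sigma] = [X] + [\Sigma]$ (which holds because the canonical $B$-structures are compatible with the pinch cobordism), one concludes $[\Sigma, \bar\nu_\Sigma] = 0$.

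For the hard inclusion $\Ker \subseteq I(X)$, assume $[\Sigma, \bar\nu_\Sigma] = 0$ and pick a $B$-null-bordism $W^{2m+2}$ of $\Sigma$. Setting $V := (X \times I) \natural W$, obtained by taking the boundary connected sum at $X \times \{1\}$ with $W$ along $\Sigma$, produces a $B$-bordism from $X$ to $X \sharp \Sigma$. I would then invoke Kreck's surgery theorem to perform surgeries on the interior of $V$ below the middle dimension, making both inclusions $X, X \sharp \Sigma \hookrightarrow V$ into $(m+1)$-equivalences. The residual obstruction to converting $V$ into an $s$-cobordism lies in the $l$-monoid $l_{2m+2}(\z[\pi_1(B)])$. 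Since $B$ is simply-connected and $2m+2 \equiv 2 \pmod 4$ for both $m=4$ and $m=8$, this reduces to $L_{2m+2}(\z) = \z/2$, detected by the Arf invariant. Once this obstruction is killed, the $s$-cobordism theorem (applicable since $\pi_1(V)=1$, so $\Wh(1)=0$) yields a diffeomorphism $X \cong X \sharp \Sigma$, hence $\Sigma \in I(X)$.

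The main obstacle is handling this Arf invariant. My plan is to exploit the freedom in choosing the null-bordism: the set of $B$-null-bordisms of $\Sigma$ is a torsor over $\Omega_{2m+2}(B,p)$, and replacing $W$ by $W \natural Y$ for a closed $B$-manifold $Y^{2m+2}$ shifts the surgery obstruction of $V$ by the Arf invariant of $Y$ (viewed as a bordism from $\emptyset$ to $\emptyset$). Provided the natural map $\Omega_{2m+2}(B,p) \to L_{2m+2}(\z)$ is surjective, we can always cancel the obstruction. I would verify this surjectivity either by a direct bordism computation in dimensions $10$ and $18$ based on the Postnikov tower of $B_{2t+1,m}$, or by exhibiting an explicit closed $B$-manifold with Arf invariant $1$ (e.g.\ via a plumbing or fiber-bundle construction carrying the appropriate $B_{2t+1,m}$-structure). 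This is where I expect to do the most delicate work.
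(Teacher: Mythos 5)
Your overall strategy matches the paper's: the easy inclusion via uniqueness of the normal $B$-structure and additivity of connected sums in bordism, and the hard inclusion via Kreck's modified surgery theory, correctly reducing everything to the surjectivity of the surgery-obstruction homomorphism $\theta\colon\Omega_{2m+2}(B,p)\to L_{2m+2}(\z)\cong\z/2$. That surjectivity is exactly what you defer as ``the most delicate work,'' and this is a genuine gap, since neither of your proposed routes would succeed as stated. Plumbings produce parallelizable manifolds whose closed-up versions lie in $bP_{2m+2}$, which has trivial Kervaire invariant in dimensions $10$ and $18$ (neither is a Kervaire-invariant-one dimension), so you cannot realize a nontrivial obstruction that way; and a raw Postnikov-tower computation of $\Omega_{2m+2}(B,p)$ does not by itself identify the \emph{value} of the abstract modified surgery obstruction $\theta$ on the classes you find.

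The paper bridges this gap with three ingredients absent from your plan: (i) a lemma identifying $\theta$, via the $2$-local isomorphism $\Omega_{2m+2}(B,p)\to\Omega_{2m+2}^{BO\langle m\rangle}$ of Kramer--Stolz, with Brown's generalized Kervaire invariant $\Psi\colon\Omega_{2m+2}^{BO\langle m\rangle}\to\z/2$; its proof requires aligning the surgery kernel with Brown's quadratic form on $H^{m+1}(N;\z/2)$, and for $m=8$ a normal-bundle correction by connect-summing with a null-homologous $S^9$ of nontrivial stable normal bundle (using $\pi_9(BSO)\cong\z/2$); (ii) the Brown--Peterson identity $\Psi(\alpha^2\cdot\beta)\equiv\chi(\beta)\pmod 2$ for the nontrivial $\alpha\in\pi_1^S$; and (iii) the observation that taking $\beta=\hp^2$ or $\op^2$, of Euler characteristic $3$, forces $\Psi$ to be surjective. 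The manifold realizing the nontrivial obstruction is thus essentially $(S^1,\tau)^{\times 2}\times\hp^2$ (resp.\ $\op^2$), and seeing that its Kervaire invariant is nonzero is precisely the content of the Brown--Peterson product formula, not something available from plumbing or a generic fiber-bundle construction.
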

\begin{proof}
 First note that we have the identity 
$[X, \bar \nu] + [\Sigma, \bar \nu_{\Sigma}]=[X \sharp \Sigma, \bar \nu']$ where $\bar \nu'$ is the unique normal $B$-structure on $X \sharp \Sigma$. If $X \sharp \Sigma \cong X$, since the normal $B$-structure $\bar \nu$ on $X$ is unique up to homotopy, we have  $[\Sigma, \bar \nu_{\Sigma}] =0 \in \Omega_{2m+1}(B,p)$. This shows 
$$I(X) \subset \Ker (\Theta_{2m+1} \to \Omega_{2m+1}(B,p)).$$

To prove the other direction of the inclusion we define a homomorphism 
$$\theta \colon\Omega_{2m+2}(B,p) \to L_{2m+2}(\z)$$
where $L_{2m+2}(\z) \cong \z/2$ is Wall's group of simply-connected surgery obstruction. For an element $[N,g] \in \Omega_{2m+2}(B,p)$, consider the surgery problem
$$\bar \nu + g \colon X \times I + N \to B,$$ 
Since we are in the case $[(2m+1)/2]=m$, the modified surgery obstruction $\theta$ is in the group $L_{2m+2}(\z)$ (\cite[p.~708]{Kreck99}). It's easy to verify by definition that $\theta$ is a homomorphism.

Brown \cite{Brown} defined a generalization of the Kervaire invariant 
$$\Psi \colon \Omega_{2m+2}^{\mathrm{Spin}}  \to \z/2.$$ 
For $m=8$, we still denote by $\Psi$ the composition $\Omega^{\mathrm{string}}_{18} \to \Omega^{\mathrm{spin}}_{18} \to \z/2$. It's shown in \cite[Theorem 1]{Brown1} that for $\alpha \in \pi_1^S$ the non-trivial element and $\beta \in \Omega^{\mathrm{spin}}_{8k}$
$$\Psi(\alpha^2 \cdot \beta) \equiv \chi(\beta) \pmod 2$$ 
where $\chi(\beta)$ is the Euler characteristic. Taking $\beta= \mathbb H \mathrm P^2$ or $\mathbb O \mathrm P^2$, we see that $\Psi \colon \Omega^{\mathrm{spin}}_{10} \to \z/2$ and $\Psi \colon \Omega^{\mathrm{string}}_{18}  \to \z/2$ are surjective. The map 
$B \to BO\langle m \rangle $ induces a homomorphism of bordism groups $\Omega_{2m+2}(B,p) \to
                     \Omega_{2m+2}^{BO\langle m \rangle} $.

\begin{Lemma}
There is a commutative diagram
$$\xymatrix{\Omega_{2m+2}(B,p) \ar[d] \ar[r]^{\ \ \ \theta} & \z/2 \\
                     \Omega_{2m+2}^{BO\langle m \rangle} \ar[ur]^{\Psi} & }$$

\end{Lemma}
\begin{proof}
For $[N,g] \in \Omega_{2m+1}(B,p)$, we may first do surgery to make $g$ being $(m-1)$-connected and $g_* \colon \pi_m(N) \to \pi_m(B)$ an isomorphism. Let $S^m_N \subset N$, $S^m_X \subset X$ be embedded spheres representing a generator of $\pi_m(N)=H_m(N) \cong \z$ and $\pi_m(X)=H_m(X)\cong \z$ respectively. Let $\{e_i\}$ be a symplectic basis of $H_{m+1}(N)$, choosing embedded spheres $S^{m+1}_i \subset N$ representing $e_i$. When $m=4$, the normal bundle of $S^5_i$ is  stably trivial since $\pi_5(BSO)=0$. When $m=8$, note that $\pi_9(B) =\pi_9(BSO) \cong \z/2$, a generator is given by $S^9 \stackrel{\eta}{\rightarrow} S^8 \stackrel{f}{\rightarrow} B$, where $\eta$ is (the suspension of) the Hopf map and $f$ is a generator of $\pi_8(B)=\pi_8(BSO) \cong \z$. Therefore there is a null-homologous embedded sphere $S^9_0 \subset N$ with non-trivial stable normal bundle. Now if the stable normal bundle of $S^9_i \subset N$ is non-trivial, we may take a connected-sum of $S^9_i$ with the above $S^9_0$, this will not change the homology class of $S^9_i$ and the stable normal bundle of the new embedded sphere is trivial. Brown \cite{Brown} defined a quadratic form $\psi_N \colon H^{m+1}(N;\z/2) \to H^{2m+2}(N;\z/2)=\z/2$ and showed that the normal bundle of $S^{m+1}_i$ is trivial if and only if $\psi_N(De_i)=0$, where $D$ is the Poincar\'e dual map. Let $W=X \sharp N$, then the normal bundle of $S^m_X \sharp S^m_N \subset W$ is trivial and we may do surgery on $S^m_X \sharp S^m_N$ to make $(W,X)$ $m$-connected. This will not produce new $(m+1)$-dimensional homology classes since $S^m_N$ is dual to an $(m+2)$-cycle in $N$.  Now $\{S^{m+1}_i\}$ represent a symplectic basis of the surgery kernel $H_{m+1}(W,X)$, therefore the Arf-invariant of surgery obstruction coincides with the Arf-invariant of $\psi_N$, which is $\Psi(N)$.
\end{proof}

Now we continue the proof of Proposition \ref{prop:inertia}. The homomorphism $\Omega_{2m+2}(B,p) \to \Omega_{2m+2}^{BO\langle m \rangle}$ is a $2$-local isomorphism (\cite[Lemma 4.2]{KramerStolz}), meaning that the kernel and the cokernel are finite groups of odd order, and $\Psi \colon \Omega_{2m+2}^{BO\langle m \rangle}  \to \z/2$ 
is surjective, therefore $\theta \colon \Omega_{2m+2}(B,p) \to \z/2$ is surjective. For $\Sigma \in \Ker(\Theta_{2m+1} \to \Omega_{2m+1}(B,p))$,  $X$ and $X \sharp \Sigma$ are $B$-bordant. Let $W$ be a normal $B$-bordism between $X \sharp \Sigma$ and $X$, we do surgery on $W$ to make $(W, X)$ an $h$-cobordism. If the surgery obstruction is non-trivial, then by concatenating a $B$-bordism between $X$ and $X$ with non-trivial surgery obstruction, we have a $B$-bordism between $X\sharp \Sigma$ and $X$ with trivial surgery obstruction. This shows that $\Sigma \in I(X)$.
\end{proof}

By Propositions \ref{prop:ker} and \ref{prop:inertia} we have identifications
\begin{eqnarray*}
\Ker \bar \phi = I(X) &  = & \Ker(\Theta_{2m+1} \to \Omega_{2m+1}(B,p)) \\
& = & \Ker(\Theta_{2m+1} \to \Omega_{2m+1}^{BO\langle m \rangle})
\end{eqnarray*}
where the last equality comes from the facts that the homomorphism $\Omega_{*}(B,p) \to \Omega_{*}^{BO\langle m \rangle}$ is a $2$-local isomorphism and  $\Theta_{9} \cong (\z/2)^{3}$,  $\Theta_{17} \cong (\z/2)^4$.
Knowledge of the homomorphism $\Theta_{2m+1} \to \Omega_{2m+1}^{BO\langle m \rangle}$ can be mostly found in the literature, though some calculations are needed for $m=8$.

When $m=4$,  $\Omega_{2m+1}^{BO\langle m \rangle} = \Omega_9^{\mathrm{spin}}$,  it's well-known that the image of $\Theta_9 \to \Omega_9^{\mathrm{spin}}$ is isomorphic to $\z/2$ and detected by the $\alpha$-invariant $\hat{\mathscr A} \colon \Omega_9^{\mathrm{spin}} \to  KO^{-9}$ (see \cite[p.~350, Theorem]{Stong} and \cite[\S 3]{Milnor65}).

When $m=8$, we have $\Omega_{2m+1}^{BO\langle m \rangle} = \Omega_{17}^{\mathrm{string}} \cong (\z/2)^3$ (see \cite[Theorem 1.2]{Davis} and \cite[Theorem 4.3, Theorem 4.5]{GiamString}), $\pi_{17}^S \cong (\z/2)^4$ with generators $\eta  \eta^*$, $\eta^2  \rho$,   $\nu  \kappa$ and $\bar{\mu}$, where $\eta$ is the generator of $\pi_1^S \cong \z/2$, $\eta^* \in \pi_{16}^S$ is an element of order $2$, $\rho \in \pi_{15}^S$ is an element of order $32$ (\cite[Chap XIV]{Toda}). It's also known that  the image of the $J$-homomorphism  $J \colon \pi_{17}(O) \to \pi_{17}^S$ is generated by $\eta^2  \rho$.

\begin{Lemma}\label{kerHurewicz}
The kernel of the Hurewicz homomorphism $$h^{\mathrm{string}}_{17}:\pi_{17}^S \rightarrow \Omega_{17}^{\mathrm{string}}$$ 
is isomorphic to $\z/2\oplus \z/2$, generated by $\eta \eta^*$ and $\eta^2 \rho$; the image of $h_{17}^{\mathrm{string}}$ is isomorphic to $\z/2\oplus \z/2$, generated by $\bar \mu$ and $\nu \kappa$. $\Omega_{17}^{\mathrm{string}} \cong (\z/2)^3$ is generated by $\bar \mu$, $\nu \kappa$ and $[(S^1,\tau) \times M]$, where $\tau$ stands for the non-trivial spin structure on $S^{1}$. 
\end{Lemma}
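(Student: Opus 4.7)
My plan is to identify the image and kernel of the Hurewicz map $h^{\mathrm{string}}_{17} \colon \pi_{17}^S \to \Omega_{17}^{\mathrm{string}}$ by evaluating it on each of the four generators of $\pi_{17}^S$ listed by Toda, using the image-of-$J$ argument for one generator, the $\hat{\mathscr{A}}$-invariant to detect $\bar\mu$, and Adams-spectral-sequence data from \cite{Davis} and \cite{GiamString} for the remaining generators. I would then exhibit $[(S^1,\tau)\times M]$ as a third generator of $\Omega_{17}^{\mathrm{string}}$ outside the Hurewicz image.

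For the kernel, $\eta^2\rho$ lies in the image of $J$, so it is represented by $S^{17}$ with a non-standard stable framing; forgetting the framing and keeping only the underlying string structure gives the class of $S^{17}$ with its canonical string structure, which bounds $D^{18}$, hence $h^{\mathrm{string}}(\eta^2\rho) = 0$. The image-of-$J$ trick is not available for $\eta\eta^*$, so I would invoke the Adams-spectral-sequence description of $\Omega^{\mathrm{string}}_*$ at the prime $2$ from \cite{Davis} and \cite{GiamString}: the $E_2$-class detecting $\eta\eta^*$ in the sphere spectrum maps to zero under the induced map of Ext-algebras into string bordism, so $h^{\mathrm{string}}(\eta\eta^*) = 0$.

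For the image, the composition $\hat{\mathscr{A}} \colon \Omega_{17}^{\mathrm{string}} \to \Omega_{17}^{\mathrm{spin}} \to KO_{17} \cong \z/2$ is nontrivial on $\bar\mu$ but vanishes on $\nu\kappa$, because $\hat{\mathscr{A}}$ is multiplicative and $\hat{\mathscr{A}}(\nu) \in KO_3 = 0$; the nonvanishing of $h^{\mathrm{string}}(\nu\kappa)$ itself I would read off from the Adams-SS data in \cite{Davis} and \cite{GiamString}. Combined with the previous paragraph and $|\pi_{17}^S| = 16$, this forces $|\ker h^{\mathrm{string}}_{17}| = |\mathrm{im}\, h^{\mathrm{string}}_{17}| = 4$ with the stated generators.

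Finally, to exhibit a third independent generator of $\Omega_{17}^{\mathrm{string}} \cong (\z/2)^3$, I would take the class $[(S^1,\tau) \times M]$, whose $\hat{\mathscr{A}}$-invariant equals $\eta \cdot \hat A(M) \in KO_{17}$; for $M$ with $\hat A(M)$ odd this class is nontrivial, and its independence from $h^{\mathrm{string}}(\bar\mu)$ in $\Omega_{17}^{\mathrm{string}}$ would be established by a secondary invariant or further Adams-SS comparison. The main obstacle is the verification that $h^{\mathrm{string}}(\eta\eta^*) = 0$: since $\eta\eta^*$ is not in the image of the $J$-homomorphism, its vanishing in string bordism is not accessible via elementary framed-manifold manipulations and must be extracted from the Adams-spectral-sequence computations of string bordism in the literature.
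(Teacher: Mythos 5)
Your overall architecture is sound, and your treatment of $\bar\mu$ (detected by the $\alpha$-invariant, which is multiplicative and vanishes on $\nu\kappa$ because $KO_3 = KO_6 = 0$) and of $\nu\kappa$ (Adams spectral sequence for $M\mathrm{String}$) matches the paper. Your argument that $h^{\mathrm{string}}(\eta^2\rho)=0$ because it lies in $\mathrm{Im}\,J$ is also correct and is a legitimate alternative to the paper's argument. But you yourself flag two steps you cannot complete, and both are genuine gaps.

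First, the vanishing $h^{\mathrm{string}}_{17}(\eta\eta^*)=0$. You propose to extract it from the Adams spectral sequence for $M\mathrm{String}$, acknowledging this is "the main obstacle." The paper avoids this entirely with a one-line observation you missed: $h^{\mathrm{string}}_*$ is a \emph{ring} homomorphism, $\Omega_{16}^{\mathrm{string}}\cong\z\oplus\z$ is torsion-free while $\pi_{16}^S$ is finite, so $h_{16}^{\mathrm{string}}=0$, and therefore
$h^{\mathrm{string}}_{17}(\eta\eta^*)=h^{\mathrm{string}}_{16}(\eta^*)\cdot h^{\mathrm{string}}_{1}(\eta)=0$,
and likewise $h^{\mathrm{string}}_{17}(\eta^2\rho)=h^{\mathrm{string}}_{16}(\eta\rho)\cdot h^{\mathrm{string}}_{1}(\eta)=0$. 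This multiplicativity argument is the key idea of the kernel computation; it handles both generators at once and makes the $\mathrm{Im}\,J$ discussion unnecessary. Without it, your proof of the kernel is incomplete.

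Second, the independence of $[(S^1,\tau)\times M]$ from the Hurewicz image. You observe that when $\hat A(M)$ is odd its $\hat{\mathscr A}$-invariant is nontrivial, but this is the same $\z/2$ that detects $\bar\mu$, so by itself it cannot separate the two classes — a point you concede by appealing to an unspecified "secondary invariant or further Adams-SS comparison." Moreover the $\hat{\mathscr A}$-invariant of $[(S^1,\tau)\times M]$ vanishes when $\hat A(M)$ is even, which does occur for some $t$, so this approach fails outright in that case. The paper instead cites its Lemma~\ref{lem:bound}, a concrete computation in $\Omega_{16}^{\mathrm{spin}}\otimes\z/2$ built from the generators $\hp^4$, $\hp^2\times\hp^2$, $\hp^2\times Y$, $Y\times Y$, $N\times K$, which shows $[(S^1,\tau)\times M]$ is not in the image of $\Theta_{17}\to\Omega_{17}^{\mathrm{spin}}$, and hence not in the image of $\Theta_{17}\to\Omega_{17}^{\mathrm{string}}$; since the Hurewicz image coincides with the image of $\Theta_{17}$, this gives the desired independence for all admissible $M$.
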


\begin{proof}
The Hurewicz homomorphism $h_{*}^{\mathrm{string}} \colon \pi_{*}^S \to \Omega_{*}^{\mathrm{string}}$ is a ring homomorphism and $h_{16}^{\mathrm{string}}$ is trivial since $\Omega_{16}^{\mathrm{string}}$ is isomorphic to $\z\oplus \z$ (\cite[p.~538]{GiamString}) and $\pi_{16}^{S}$ is finite. Therefore
$$h^{\mathrm{string}}_{17}(\eta\eta^*)=h^{\mathrm{string}}_{16}(\eta^*)h^{\mathrm{string}}_{1}(\eta)=0$$
and $$h^{\mathrm{string}}_{17}(\eta^2\rho)=h^{\mathrm{string}}_{16}(\eta\rho)h^{\mathrm{string}}_{1}(\eta)=0.$$

It is known that the periodic element $\bar{\mu}$ has non-trivial $\alpha$-invariant (since  $\bar \mu \in \langle \mu, 2 \iota, 8 \sigma \rangle $ by \cite[Chap XIV]{Toda}, the $\alpha$-invariant of elements in $\pi_*^S$ coincides with Adams' invariant $d_{\mathbb R}$ and $d_{\mathbb R}(\bar \mu) =1$ by \cite[Example 12.19]{Adams}). Thus $h^{\mathrm{string}}_{17}(\bar{\mu})$ is non-trivial. 

The element $\nu\kappa$ is represented by  $h_2d_0=h_0e_0$ in the $E_2$-page of the Adams spectral sequence for $\pi_*^S$, $h_0e_0$ is mapped nontrivially to the $E_2$-page of the Adams spectral sequence for $\pi_*(M\mathrm{String})$ and this element survives to infinity (see \cite[Table 4.34]{Tangora70}, \cite[Theorem 3.3]{GiamString}, \cite[Theorem 2.2]{Davis} and \cite[Appendix A3]{Ravenel}). Therefore $h^{\mathrm{string}}_{17}(\nu \kappa)$ is non-trivial. But the  $\alpha$-invariant of  $\nu\kappa$ is zero since $KO^{-3}(\mathrm{pt})=KO^{-6}(\mathrm{pt})=0$. It follows that the images of $\nu\kappa$ and $\bar{
\mu}$ are linearly independent in $\Omega^{\mathrm{string}}_{17}$.

That $[(S^1, \tau) \times M]$ generates another copy of $\z/2$ in $\Omega^{\mathrm{string}}_{17}$ follows from Lemma  \ref{lem:bound}.
\end{proof}

There is a short exact sequence
$$0 \to bP_{18} \to \Theta_{17} \to \pi_{17}^S/\mathrm{Im}J \to 0$$
where $bP_{18}$ consists of homotopy spheres which bound parallelizable manifolds. Clearly elements in $bP_{18}$ are mapped to $0$ in $\Omega_{17}^{\mathrm{string}}$. Summarizing the above information, we have the following lemma.

\begin{Lemma}\label{lem:comp}
The image of  $\Theta_{9} \to \Omega_{9}^{\mathrm{spin}}$ is isomorphic to $\z/2$, generated by a Hitchin sphere; the image of $\Theta_{17} \to \Omega_{17}^{\mathrm{string}}$ is isomorphic to $\z/2\oplus \z/2$, generated by $\bar \mu$ and $\nu \kappa$.
\end{Lemma}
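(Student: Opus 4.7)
The plan is to handle $m=4$ and $m=8$ separately, in each case comparing the natural map $\Theta_{2m+1}\to\Omega_{2m+1}^{BO\langle m\rangle}$ with a suitable Hurewicz-type map out of $\pi_{2m+1}^S$.

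For $m=4$ the statement is essentially a citation. The $\alpha$-invariant $\hat{\mathscr A}\colon\Omega_9^{\mathrm{spin}}\to KO^{-9}(\mathrm{pt})\cong\z/2$ detects homotopy spheres by the results of Stong and Milnor quoted in the text immediately before the lemma, so the composition $\Theta_9\to\Omega_9^{\mathrm{spin}}\xrightarrow{\hat{\mathscr A}}\z/2$ is surjective and sends a Hitchin sphere to the generator. Since $bP_{10}\subset\Theta_9$ consists of spheres bounding parallelizable (in particular spin) manifolds, its image in $\Omega_9^{\mathrm{spin}}$ is zero, and so the image of the whole group is precisely the $\z/2$ generated by any Hitchin sphere.

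For $m=8$ I would combine the split sequence $0\to bP_{18}\to\Theta_{17}\to\pi_{17}^S/\mathrm{Im}J\to 0$ with Lemma \ref{kerHurewicz}. Elements of $bP_{18}$ bound parallelizable, hence string, manifolds, so they are killed in $\Omega_{17}^{\mathrm{string}}$; the map $\Theta_{17}\to\Omega_{17}^{\mathrm{string}}$ therefore factors through $\pi_{17}^S/\mathrm{Im}J$. Because $\mathrm{Im}J=\langle\eta^2\rho\rangle$ and $\eta^2\rho\in\ker h_{17}^{\mathrm{string}}$ by Lemma \ref{kerHurewicz}, the Hurewicz homomorphism $h_{17}^{\mathrm{string}}\colon\pi_{17}^S\to\Omega_{17}^{\mathrm{string}}$ also descends to $\pi_{17}^S/\mathrm{Im}J$. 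I expect the two resulting maps on $\pi_{17}^S/\mathrm{Im}J$ to agree: a homotopy sphere is stably parallelizable, its framings form a torsor over $\pi_{17}(O)$ whose $J$-image is exactly $\mathrm{Im}J$, and the canonical normal string structure used to define the first map is induced by any such framing. Lemma \ref{kerHurewicz} then identifies the image of $h_{17}^{\mathrm{string}}$ with the $\z/2\oplus\z/2$ spanned by $\bar\mu$ and $\nu\kappa$, which yields the claim.

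The step requiring some care is verifying the agreement of those two descended maps. This rests on the uniqueness up to homotopy of the normal string structure on a homotopy sphere (no characteristic class obstructions are present) together with the observation that any framed representative of a class in $\mathrm{Im}J$ extends over a framed disc and so has trivial string bordism class. Once this identification is in hand, the lemma is pure bookkeeping on top of Lemma \ref{kerHurewicz}.
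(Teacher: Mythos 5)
Your proposal is correct and follows essentially the same route as the paper: the $m=4$ case is a citation of Stong and Milnor, and for $m=8$ the paper likewise observes that $bP_{18}$ maps to zero in $\Omega_{17}^{\mathrm{string}}$ so that the map factors through $\pi_{17}^S/\mathrm{Im}J$, then invokes Lemma~\ref{kerHurewicz}. The one thing you make explicit that the paper leaves implicit is the identification of the natural map $\Theta_{17}\to\Omega_{17}^{\mathrm{string}}$ with the descended Hurewicz map; your justification (uniqueness of the string structure on a homotopy sphere plus the fact that framings differ by $\mathrm{Im}J$, which is killed by $h_{17}^{\mathrm{string}}$) is the right one and a worthwhile clarification.
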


\section{Proof of the theorems}\label{sec:proof}

First we show that the boundary diffeomorphism is not isotopic  rel $D$ to a local diffeomorphism. This will imply $\ker \phi=\ker \bar \phi$. Do a surgery on the embedded $S^1 \times D$ in the mapping torus of the boundary diffeomorphism, denote the result manifold by $X'$. Let $\tau$ be the non-trivial spin structure on $S^1$,  then $X'$ is the result of the spin surgery on $S^1 \times D \subset (S^1, \tau )\times M $. If the boundary diffeomorphism were isotopic rel $D$ to a local diffeomorphism, then $X'$ would be diffeomorphic to $X \sharp \Sigma$ for a Hitchin sphere $\Sigma \in \Theta_{2m+1}$. Since $S^1 \times M$ with the trivial spin structure is null-bordant in $\Omega_{2m+1}^{\mathrm{spin}}$, this would imply $[(S^1, \tau) \times M]=[\Sigma] \in \Omega_{2m+1}^{\mathrm{spin}}$. But this is ruled out by the following lemma.

\begin{Lemma}\label{lem:bound}
$[(S^1, \tau) \times M]$ is not in the image of $\Theta_{2m+1} \to \Omega_{2m+1}^{\mathrm{spin}}$.
\end{Lemma}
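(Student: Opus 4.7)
The strategy is to produce a spin bordism invariant that vanishes on every homotopy $(2m{+}1)$-sphere but is non-zero on $[(S^1,\tau)\times M]$. The starting observation is multiplicativity in the spin cobordism ring:
\[
[(S^1,\tau)\times M] \;=\; \eta \cdot [M] \;\in\; \Omega_{2m+1}^{\mathrm{spin}},
\]
where $\eta = [(S^1,\tau)]$ generates $\Omega_1^{\mathrm{spin}}\cong\mathbb{Z}/2$. In the degrees of interest multiplication by $\eta$ realises the mod-$2$ reduction $\Omega_{2m}^{\mathrm{spin}}/2\xrightarrow{\cong}\Omega_{2m+1}^{\mathrm{spin}}$, so $\eta\cdot[M]$ is completely determined by $[M]\bmod 2\in\Omega_{2m}^{\mathrm{spin}}/2$.

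For $m=4$, I would invoke the Anderson--Brown--Peterson splitting $MSpin_{(2)}\simeq ko\vee ko\langle 8\rangle\vee(\text{higher summands})$, which presents $\Omega_8^{\mathrm{spin}}\cong\mathbb{Z}\oplus\mathbb{Z}$ in a basis consisting of a Bott-type manifold $B$ (with $\hat A(B)=1$, $\sigma(B)=0$) and $\mathbb{HP}^2$ (with $\hat A=0$, $\sigma=1$); thus the $ko_8$-summand is detected by $\hat A$ and the $ko\langle 8\rangle_8$-summand by $\sigma$. Since every $\mathbb{HP}^2$-like manifold has $\sigma(M)=\pm 1$, we get the mod-$2$ decomposition $[M]\equiv \hat A(M)\,[B]+[\mathbb{HP}^2]\pmod 2$, and therefore
\[
\eta\cdot[M] \;=\; \hat A(M)\,[\eta B] \;+\; [\eta\,\mathbb{HP}^2] \;\in\; \Omega_9^{\mathrm{spin}}\cong(\mathbb{Z}/2)^2.
\]
Using the $ko$-module structure on $ko\langle 8\rangle$, the class $[\eta B]$ lies in the $ko_9$-summand (detected by the $\alpha$-invariant) and $[\eta\,\mathbb{HP}^2]$ in the $ko\langle 8\rangle_9$-summand, so the $ko\langle 8\rangle_9$-coordinate of $\eta\cdot[M]$ equals $1\neq 0$, independently of the parity of $\hat A(M)$.

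By Lemma~\ref{lem:comp}, the image of $\Theta_9\to\Omega_9^{\mathrm{spin}}$ is the $\mathbb Z/2$ generated by a Hitchin sphere. I would argue this image lies entirely in the $ko_9$-summand: a homotopy $9$-sphere has vanishing integral cohomology in the intermediate range, so the cohomological characteristic-number pairing that detects the $ko\langle 8\rangle_9$-coordinate is forced to vanish on it. Comparing coordinates yields $\eta\cdot[M]\notin\mathrm{Image}(\Theta_9\to\Omega_9^{\mathrm{spin}})$. For $m=8$ the same scheme applies: use the corresponding ABP analysis of $\Omega_{16}^{\mathrm{spin}}$ and $\Omega_{17}^{\mathrm{spin}}$, the fact that $\sigma(\mathbb{OP}^2)=1$, and Lemma~\ref{kerHurewicz} together with the forgetful map $\Omega_{17}^{\mathrm{string}}\to\Omega_{17}^{\mathrm{spin}}$ to constrain $\mathrm{Image}(\Theta_{17}\to\Omega_{17}^{\mathrm{spin}})$ to a summand avoiding the signature-type contribution coming from $[\eta\,\mathbb{OP}^2]$.

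The main obstacle is to pin down the secondary ``$ko\langle 8\rangle$-coordinate'' as an explicit characteristic number and to verify rigorously its vanishing on homotopy spheres, especially in the more involved dimension-$17$ case where the ABP decomposition of $\Omega_{17}^{\mathrm{spin}}$ is richer. This will rely on the structural results of Anderson--Brown--Peterson and Stong, together with the Adams spectral-sequence input already set up in Lemma~\ref{kerHurewicz}.
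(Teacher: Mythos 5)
Your proposal follows essentially the same scheme as the paper: both express $[M]$ modulo $2$ in a basis for $\Omega_{2m}^{\mathrm{spin}}$, use a characteristic number to show that the projective-plane-type coordinate is odd, multiply by $\eta=[(S^1,\tau)]$, and compare with the image of $\Theta_{2m+1}$. The paper does this concretely: it takes the Anderson--Brown--Peterson generators ($\hp^2$ and an almost-parallelizable $Y$ with $\hat A(Y)=1$ in dimension $8$; the five generators $\hp^4$, $\hp^2\times\hp^2$, $\hp^2\times Y$, $Y\times Y$, $N\times K$ in dimension $16$), shows the $\hp^2$- (resp.~$\hp^4$- and $\hp^2\times\hp^2$-) coefficient of $[M]$ is odd using $p_1^2$ in dimension $8$ and signature in dimension $16$, and then cites Milnor for the explicit identification of the image of $\Theta_{2m+1}$ as $[(S^1,\tau)\times Y]$ or $[(S^1,\tau)\times Y\times Y]$. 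Your use of signature in place of $p_1^2$ is a valid minor variation, and your overall bookkeeping via the ABP splitting is structurally equivalent.

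The one genuine problem is in the step where you argue that the image of $\Theta_9$ ``lies entirely in the $ko_9$-summand.'' Your justification --- that a homotopy $9$-sphere has no intermediate cohomology so ``the cohomological characteristic-number pairing that detects the $ko\langle 8\rangle_9$-coordinate is forced to vanish'' --- does not work as stated: in dimension $9$ there are no integral-cohomology Pontryagin numbers, and the invariant dual to the $\Sigma^8 ko$-summand in degree $9$ is a $KO$-Pontryagin number, not a cohomological one. (The correct version of your intuition is that homotopy spheres are stably parallelizable, so their $KO$-Pontryagin classes vanish and this secondary $KO$-characteristic number is automatically zero; alternatively one can cite Milnor's explicit computation, which is what the paper does.) You rightly flag that this step, and especially its $17$-dimensional analogue, is the missing piece; that is indeed the content that needs to be supplied, and the paper's choice to work with explicit ABP generators and to quote Milnor is precisely how it avoids having to identify these $KO$-characteristic numbers abstractly.
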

\begin{proof}
This will be proved by computation in the corresponding bordism group. It is known that the image of $\Theta_{2m+1}$ in $\Omega^{\mathrm{spin}}_{2m+1}$ is isomorphsic to $\z/2$, detected by the $\alpha$-invariant (\cite[p.~350, Theorem]{Stong}). There is an $8$-dimensional almost-parallelizable manifold $Y$ with $\hat A(Y)$ $=1$ and $\mathrm{sign}(Y)=-7\cdot 2^5$ (\cite[\S 3]{Milnor65}).

When $m=4$, the group $\Omega_8^{\mathrm{spin}} $ is isomorphic to $\z \oplus \z$, generated by $\mathbb H \mathrm P^2$ and $Y$.  In this group we have $[M]= a[\mathbb H \mathrm P^2] + b [Y]$ with $a$ odd since $p_1^2[\mathbb H \mathrm P^2]=4$, $p_1^2[Y]=0$ and $p_1^2[M]=4(1+2t)^2$. The group $\Omega_9^{\mathrm{spin}}$ is isomorphic to $\z/2 \oplus \z/2$, generated by $[(S^1, \tau)\times \mathbb H \mathrm P^2]$ and $[(S^1, \tau) \times  Y]$ (\cite[p.~258]{ABP66}).  The non-trivial element in the image of $\Theta_9 \to \Omega_9^{\mathrm{spin}}$ equals $[(S^1, \tau) \times Y]$ (\cite[\S 3]{Milnor65}). This shows  $[(S^1, \tau) \times M] \ne [\Sigma]$ in  $\Omega_9^{\mathrm{spin}}$. 

When $m=8$, the group $\Omega_{16}^{\mathrm{spin}}\otimes \z/2\cong (\z/2)^5$ is generated by $\hp^4$, $\hp^2\times \hp^2$, $\hp^2 \times Y$, $Y \times Y$ and $N \times K$, where $N$ is a 12-dimensional spin-manifold,  $K$ is the Kummer surface, a generator of $\Omega_{4}^{\mathrm{spin}}\cong \z$ (\cite[Page 258]{ABP66}). In this group we have 
$$
[M] = a [\hp^2\times \hp^2]+b[\hp^4]+c[\hp^2 \times Y] +d [Y \times Y] +e [N \times K],
$$
with $a+b=1$ since the signature of $M$ is equal to 1. Taking product with $S^1$ with the non-trivial spin structure gives an isomorphism 
$$ - \times [S^1, \tau] \colon \Omega_{16}^{\mathrm{spin}}\otimes \z/2  \stackrel{\cong}{\longrightarrow} \Omega_{17}^{\mathrm{spin}}\otimes \z/2$$
The non-trivial element in the image of $\Theta_{17} \to \Omega_{17}^{\mathrm{spin}}$ equals $[(S^1, \tau) \times Y \times Y]$ (\cite{Milnor65}). This shows $[(S^1, \tau) \times M] \ne [\Sigma]$.
\end{proof}

Now we are ready to finish the proof of the main theorems. We have determined $\Ker \bar \phi$ in Lemma \ref{lem:comp}, and Lemma \ref{lem:bound} implies $\Ker \phi = \Ker \bar \phi$. 

When $m=4$, in the exact sequence (\ref{sequence}) we see $\m(N) \cong \mathrm{aut}(\xi) \cong \z/2$ and $\gamma \colon \m(N) \to I(M)$ is an isomorphism. Therefore $\Theta_9 \to \m(M)$ is surjective, the image is isomorphic to $\z/2$, generated by a Hitchin sphere. If the $\hat A$-genus of $M$ is even, then $\hat{\mathscr A}(S^1 \times M)=0$, no matter which spin structure on $S^1 \times M$ is taken. Therefore for any local diffeomorphism $f$ with corresponding homotopy sphere $\Sigma_f$, 
$$\hat{\mathscr A}(M_f)=\hat{\mathscr A}(S^1 \times M \sharp \Sigma_f)=\hat{\mathscr A}(\Sigma_f).$$ 
This proves Theorem \ref{thm:main1}.

When $m=8$, $\m(M) \cong \mathrm{aut}(\xi) \cong (\z/2)^2$, and $\gamma \colon \m(M) \to I(M)$ is surjective. The exact sequence (\ref{sequence}) reduces to 
\begin{equation}\label{seq2}
 0 \to \Theta_{17}/\Ker \phi \to \m(M) \to \z/2 \to 0
 \end{equation}
with $\Theta_{17}/\ker \phi \cong \z/2 \oplus \z/2$, generated by $\overline \mu$ and $\nu \kappa$. This is a central extension since local diffeomorphisms commute with any diffeomorphism by the Disc Theorem. The proof of Theorem \ref{thm:main2} is completed by the following lemma.

\begin{Lemma}
Elements in $\m(M)$ have order $2$.
\end{Lemma}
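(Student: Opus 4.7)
The plan is to reduce the claim $f^{2} = e$ to a string-bordism vanishing, then realize the vanishing via a pair-of-pants construction.

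First, from the exact sequence (\ref{sequence}) and Lemma \ref{autxi}: since $\beta(f)\in\m(N)\cong\mathrm{aut}(\xi)\cong(\z/2)^{2}$ has order dividing two, $\beta(f^{2})=e$, so $f^{2}\in\Ker\beta=\mathrm{Im}\,\phi$. Write $f^{2}=\phi(\Sigma)$ for some $\Sigma\in\Theta_{17}$. The mapping torus of the local diffeomorphism $\phi(\Sigma)$ satisfies $M_{f^{2}}\cong(S^{1}\times M)\sharp\Sigma$, where $S^{1}$ carries the trivial (bounding) spin structure; the class $[(S^{1},\mathrm{triv})\times M]$ vanishes in $\Omega^{\mathrm{string}}_{17}$ because $D^{2}\times M$ is a string null-bordism (use $p_{1}(M)=0$, since $H^{4}(M)=0$). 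Therefore $[M_{f^{2}}]=[\Sigma]$ in $\Omega^{\mathrm{string}}_{17}$. By the identifications at the end of Section \ref{sec:local}, $\Ker\phi=\Ker\bar\phi=\Ker(\Theta_{17}\to\Omega^{\mathrm{string}}_{17})$, so it suffices to show $[M_{f^{2}}]=0$ in $\Omega^{\mathrm{string}}_{17}$.

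Next, I would construct a two-dimensional string cobordism relating $M_{f^{2}}$ to two copies of $M_{f}$. Let $P$ denote the pair of pants (the $2$-sphere with three open discs removed), with boundary loops $\gamma_{1},\gamma_{2},\gamma_{3}$ satisfying $\gamma_{1}\gamma_{2}\gamma_{3}=1$ in $\pi_{1}(P)\cong F_{2}$. The representation $\rho:\pi_{1}(P)\to\mathrm{Diff}(M)$ sending $\gamma_{1},\gamma_{2}\mapsto f$ (and hence forcing $\gamma_{3}\mapsto f^{-2}$) yields a flat $M$-bundle $E\to P$, a smooth oriented $18$-manifold with $\partial E=M_{f}\sqcup M_{f}\sqcup M_{f^{-2}}$. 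A Serre spectral sequence computation using $H^{i}(M)=0$ for $0<i<8$ and $H^{p}(P)=0$ for $p\ge 2$ gives $H^{4}(E)=0$, whence $p_{1}(E)=0$; together with the uniqueness of the string structure on $M$ and the bounding spin structure on $P$, this produces a string structure on $E$ restricting to the natural ones on each boundary mapping torus. Hence
\[
2[M_{f}]+[M_{f^{-2}}]=0 \text{ in } \Omega^{\mathrm{string}}_{17}.
\]
Since $\Omega^{\mathrm{string}}_{17}\cong(\z/2)^{3}$ is $2$-torsion by Lemma \ref{kerHurewicz} and $[M_{f^{-2}}]=[M_{f^{2}}]$, we conclude $[M_{f^{2}}]=0$, and therefore $f^{2}=e$.

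The most delicate step I anticipate is assembling the string structure on $E$ so that it restricts to the standard string structures on the three boundary mapping tori; this relies on the vanishing $p_{1}(M)=0$, the cohomological computation $H^{4}(E)=0$, and the essentially unique choice of string structure on $M$ (so that every diffeomorphism of $M$ automatically preserves it).
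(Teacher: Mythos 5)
Your overall strategy—reduce $f^2 = e$ to showing that the homotopy sphere $\Sigma$ with $f^2 = \phi(\Sigma)$ maps to zero in $\Omega^{\mathrm{string}}_{17}$, and get there by computing $[M_{f^2}]$ using the additivity of the mapping-torus construction—is the same as the paper's, which invokes Kreck's homomorphism $\mu \colon \m(M,\mathrm{rel}D) \to \Omega^{\mathrm{string}}_{17}$ directly rather than rebuilding it via a pair of pants. However, there is a genuine gap in the way you pass between two different meanings of $[M_{f^2}]$. The equality $f^2 = \phi(\Sigma)$ holds in $\m(M)$, not in $\m(M,\mathrm{rel}D)$: the literal composition $f\circ f$ and the local diffeomorphism $\phi(\Sigma)$ both fix $D$, but the isotopy joining them in $\mathrm{Diff}(M)$ need not be rel $D$, so in $\m(M,\mathrm{rel}D)$ they may differ by the boundary Dehn twist $b$. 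Since $\mu(b) = [(S^1,\tau)\times M] \neq 0$, the string bordism class of the mapping torus is well defined only on $\m(M,\mathrm{rel}D)$, and on $\m(M)$ only modulo the subgroup $H = \langle [(S^1,\tau)\times M]\rangle$. Your first paragraph computes $[M_{\phi(\Sigma)}] = [\Sigma]$, but your pair-of-pants bordism computes $[M_{f\circ f}]$; these agree only modulo $H$, so from $[M_{f\circ f}]=0$ you may only conclude $[\Sigma] \in H$. The paper avoids this by working with the induced map $\bar\mu \colon \m(M) \to \Omega^{\mathrm{string}}_{17}/H$ and then invoking Lemma \ref{kerHurewicz}, which shows $H \cap \mathrm{Im}(\Theta_{17}\to\Omega^{\mathrm{string}}_{17}) = 0$; since $[\Sigma]$ lies in that image, $[\Sigma]\in H$ forces $[\Sigma]=0$. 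That extra step is missing from your argument and is exactly what makes it close.

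A secondary point: the pair-of-pants construction is essentially a re-derivation of the fact (cited by the paper from Kreck, p.~656) that the mapping-torus construction is a homomorphism into bordism, so it adds length without new content. Moreover your justification of the string structure on $E$ is thinner than it needs to be. The string structure on $E$ is indeed unique once one has a spin structure (since $H^3(E;\z/2)=0$ by the Serre spectral sequence), but $H^1(E;\z/2)\cong (\z/2)^2$, so the spin structure on $E$ is not unique, and $H^1(\partial E;\z/2)\cong(\z/2)^3$ is strictly larger than what restricts from $E$; one must check that the three mapping-torus spin structures are simultaneously hit. This does work out (it is exactly the content of the additivity of $\mu$), but it deserves more than the appeal to ``uniqueness of the string structure on $M$,'' since the relevant ambiguity here is a spin one, not a string one.
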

\begin{proof}
The mapping torus construction gives a homomorphism (c.~f.~\cite[Page 656]{Kreck78})
$$
\mu \colon \m(M, \mathrm{rel}D) \to \Omega_{17}^{\mathrm{string}}, \ \ f \to [M_f].$$
This induces a homomorphism $\bar \mu \colon \m(M) \to \Omega_{17}^{\mathrm{string}}/H$, where $H$ is the subgroup generated by $[(S^1, \tau) \times M]$.  For any $f \in \m(M)$, by the exact sequence (\ref{seq2})  $f^2$ is isotopic to a local diffeomorphism $\varphi$. Since $\Omega_{17}^{\mathrm{string}}/H \cong \z/2 \oplus \z/2$, $\bar \mu(\varphi) = \bar \mu (f^2) = 2 \cdot \bar \mu(f)=0$. But $\bar \mu (\varphi)=[\Sigma_{\varphi}]$, therefore $\Sigma_{\varphi} \in \Ker (\Theta_{17} \to \Omega_{17}^{\mathrm{string}}/H)$. By Lemma \ref{kerHurewicz} 
$$\Ker (\Theta_{17} \to \Omega_{17}^{\mathrm{string}}/H) = \Ker (\Theta_{17} \to \Omega_{17}^{\mathrm{string}}).$$ 
This shows $\varphi =0 \in \m(M)$.
\end{proof}

\begin{proof}[Proof of Corollary \ref{cor:curvature}]
Fix a metric $g$ of positive scalar curvature, the action of $\mathrm{Diff}(\mathbb H \mathrm P^2)$ on $\mathcal{R}^+_{\mathrm{scal}}$ gives a map $\mathrm{Diff}(\mathbb H \mathrm P^2) \to \mathcal{R}^+_{\mathrm{scal}}$, $h \mapsto h^*g$, which induces a map $\m(\mathbb H \mathrm P^2) \to \pi_0 \mathcal{R}^+_{\mathrm{scal}}$. It's shown in  \cite{Hitchin} that the map $\m(\mathbb H \mathrm P^2) \to  KO^{-9}(\mathrm{pt})$ given by the $\alpha$-invariant of the mapping torus has a factorization
$$\m(\mathbb H \mathrm P^2) \to \pi_0 \mathcal{R}^+_{\mathrm{scal}} \to KO^{-9}(\mathrm{pt}).$$
Theorem \ref{thm:main1} implies that for the non-trivial element $h \in \m(\mathbb H \mathrm P^2)$, $h^*g$ and $g$ are not in the same path component of $\mathcal{R}^+_{\mathrm{scal}}$. Hence the map $\pi_0\mathcal{R}^+_{\mathrm{scal}} \to \pi_0 \mathscr M^+_{\mathrm{scal}}$ is two-to-one.

Let $f$ be such an isometry, it induces an isometric action of $\z$ on $\mathbb R \times \mathbb H \mathrm P^2$ (with the product metric) by $(t,x) \mapsto (t+1, f(x))$. The orbit manifold is the mapping torus $M_f$, which clearly inherits a Riemannian metric of positive scalar curvature, hence $\hat{\mathscr{A}}(M_f)=0$. 
\end{proof}

\end{document}